\documentclass[a4paper,11pt]{article}
\usepackage{amsmath,amssymb,amsthm}
\usepackage{hyperref,url,enumitem,bm,esint}
\usepackage{color}
\usepackage[margin=30mm]{geometry}

%%%%%%%%%%%%%%%%%%%%%%%%%%%%%%%%%%%%%%%%%%%%%%%%%%%%
\definecolor{rosso}{rgb}{0,0,0}

\def\anold #1{#1}
\def\an #1{{\color{rosso}#1}}
\def\todo #1{{\color{black}#1}}
%%%%%%%%%%%%%%%%%%%%%%%%%%%%%%%%%%%%%%%%%%%%%%%%%%%%
%%%%
%\usepackage[notref,notcite]{showkeys}
%\usepackage{showkeys}
\newcommand{\vp}{\varphi}
\newcommand{\si}{\sigma}
\newcommand{\nx}{\nabla}
\newcommand{\Lx}{\Delta}
\newcommand{\W}{\mathcal{W}}
\newcommand{\E}{\mathcal{E}}
\newcommand{\pdnu}{\pd_{\bm{n}}}
\renewcommand{\div}{\mathrm{div}}
\renewcommand{\u}{\bm{u}}
\newcommand{\bE}{\bar{\mathcal{E}}}
\newcommand{\C}{\mathcal{C}}
\newcommand{\0}{\bm{0}}
\newcommand{\GD}{\Gamma_D}
\newcommand{\GN}{\Gamma_N}
\newcommand{\SD}{\Sigma_D}
\newcommand{\SN}{\Sigma_N}
\newcommand{\g}{\bm g}
\newcommand{\bn}{\bm n}
\newcommand{\R}{\mathbb{R}}
\newcommand{\bet}{\bm{\eta}}
\newcommand{\no}[1]{\| #1 \|}
\newcommand{\dx}{\, dx}
\newcommand{\dt}{\, dt}

\newcommand{\dz}{\, dz}
\newcommand{\pd}{\partial}
\newcommand{\X}{X (\Omega)}
\newcommand{\inn}[2]{\langle #1 , #2 \rangle}
\newcommand{\J}{\mathcal{J}}
\newcommand{\U}{\mathcal{U}}
\newcommand{\bS}{\mathcal{S}}
\newcommand{\bY}{\mathcal{Y}^\beta}
\newcommand{\bh}{\bm{h}}
\newcommand{\bv}{\bm{v}}
\newcommand{\bYl}{\mathcal{Y}^\beta_{\mathrm{lin}}}
\newcommand{\bX}{\mathcal{X}^\beta}
\newcommand{\bz}{\bm{z}}
\newcommand{\bs}{\bm{s}}
\newcommand{\bZ}{\mathcal{Z}^\beta}
\newcommand{\G}{\mathcal{G}}
\newcommand{\K}{\mathcal{K}}
\newcommand{\bH}{\mathcal{H}}

\newcommand{\PP}{\mathbb{P}}
\newcommand{\eps}{\varepsilon}
\newcommand{\N}{\mathbb{N}}

\theoremstyle{plain}
\newtheorem{thm}{Theorem}[]

\newtheorem{cor}{Corollary}[section]
\newtheorem{remark}{Remark}[section]

\numberwithin{equation}{section}

\title{Sparse optimal control of a phase field tumour model with mechanical effects}

\author{Harald Garcke \footnotemark[1] \and Kei Fong Lam \footnotemark[2]
\and Andrea Signori \footnotemark[3]}
\date{ }

\begin{document}

\maketitle

\renewcommand{\thefootnote}{\fnsymbol{footnote}}
\footnotetext[1]{Fakult{\"a}t f\"ur Mathematik, Universit{\"a}t Regensburg, 93040 Regensburg, Germany
({\tt Harald.Garcke@mathematik.uni-regensburg.de}).}
\footnotetext[2]{Department of Mathematics, Hong Kong Baptist University, Kowloon Tong, Hong Kong ({\tt akflam@math.hkbu.edu.hk}).}
\footnotetext[3]
{Dipartimento di Matematica e Applicazioni, Universit\`a di Milano--Bicocca
via Cozzi 55, 20125 Milano, Italy ({\tt andrea.signori02@universitadipavia.it}).}
\begin{abstract}
In this paper, we study an optimal control problem for a macroscopic mechanical tumour model based on the phase field approach.  The model couples a Cahn--Hilliard
\an{type} equation to a system of linear elasticity and a reaction-diffusion equation for a nutrient concentration.  By taking advantage of previous analytical well-posedness results established by the authors, we seek optimal controls in the form of a boundary nutrient supply, as well as concentrations of cytotoxic and antiangiogenic drugs that minimise a cost functional involving mechanical stresses.  Special attention is given to sparsity effects, where with the inclusion of convex non-differentiable regularisation terms to the cost functional, we can infer from the \anold{first-order} optimality conditions that the optimal drug concentrations can vanish on certain time intervals.\end{abstract}

\noindent \textbf{Key words.}  Sparse optimal control, tumour growth, Cahn--Hilliard equation, linear elasticity, mechanical effects, elliptic-parabolic system, optimality conditions \\

\noindent \textbf{AMS subject classification.} 49J20, 49K20, 35K57, 74B05, 35Q92.

\section{Introduction}
Mechanical stresses \an{play} a significant role in both enhancing and inhibiting the growth of tumours.  The unregulated proliferation of tumour cells displaces nearby normal tissues and in turn these tissues exert externally applied stress to resist tumour expansion.  In various experimental studies (see \cite{Cheng,Helm,Jain,Sty} and the references cited therein) high compressive stress has the effect of suppressing proliferation and can induce apoptosis (natural cell death) in tumour cells.  However, in the case where the mechanical loads are not uniform, tumours can adapt by growing in directions of least stress.  Moreover, deformations of the microenvironment brought about by these mechanical loads can alter the structure of nearby blood and lymphatic vessels, which are responsible for supplying the region with crucial nutrients, oxygen, therapeutic drugs, as well as drainage of excessive interstitial fluids containing waste products.  The gradual reduction in blood flow turns the stressed region more hypoxic and more acidic, compounding with the reduction in nutrients levels further accelerates the invasive and metastatic potentials of the tumours cells.  On the other hand this also impair\an{s} the effectiveness of immune cells or therapeutic agents as they are not able to reach certain tumour regions in sufficient quantities.  With the use of mathematical modelling \cite{Katira,StyJ}, treatments aimed at alleviating stress seem to be a promising avenue that warrant further investigations and could be use\an{d} in coordination with other anti-cancer therapies.

Recent progress in mathematical oncology have shown promising results in forecasting tumour growth and predictive simulations of treatments \cite{Agosti1,Agosti2,Lima1,Lima2,Lor1,Lor2}.  Most models employ a continuum description involving partial differential equations to capture a multitude of biological and chemical mechanisms.  Among those we focus on the subclass of phase field tumour models \cite{CrisBook,GLSS,Oden,Wise}, where the corresponding numerical simulations (\an{see, e.g.,} \cite{Cris1,Cris2,Frieboes,Wise,Xu}) are able to replicate commonly observed morphologies exhibited by tumours and their vasculatures. 

While there has been a surge of activity in the subsequent mathematical modelling and analysis of phase field tumour models, see \cite{Agosti1,ColliGomez,Cris1,Cris2,GLSS,Oden,Xu} and the references cited therein, there seems to comparatively fewer focus on mechanical interactions in tumour growth within the subclass of phase field models, aside from recent contributions \cite{Faghihi,GLS1,Lima1,Lima2}, \todo{see also \cite{BCDGSS,CMA,Gela,GelaSing} for results concerning the related Cahn--Larch\'e system}.  In light of the significance of mechanical stress, for our study, we consider a simplification of the phase field model that was proposed and studied in the authors' previous work \cite{GLS1}.  Consider a bounded domain $\Omega \subset \R^d$, $d \in \{2,3\}$, with boundary $\Gamma := \pd \Omega$ that is either of $C^{1,1}$-regularity or is convex
\anold{and is partitioned into two subregions $\GD$ and $\GN$.}
For an arbitrary $T > 0$ (which can be interpreted as the length of the medical treatment), the following model posed in the space-time cylinder \anold{$\Omega \times (0,T)$} describes the evolution of a cellular mixture containing tumour and non-tumour cells subject to various mechanisms involving a chemical species acting as nutrient and mechanical stresses:
\begin{subequations}\label{state}
\begin{alignat}{3}
\vp_t & = \Lx \mu + U(\vp, \si, \E(\u)) && \text{ in } Q := \Omega \times (0,T), \label{s:vp} \\
\mu & = - \Lx \vp + \Psi'(\vp) - \chi \sigma + \W_{,\vp}(\vp,\E(\u)) && \text{ in } Q, \label{s:mu} \\
\W_{,\vp}(\vp, \E(\u)) & =  -\C(\E(\u) - \bE - \vp \E^*)  : \E^*&& \\
\beta \si_t & = \Lx \si + S(\vp, \si) && \text{ in } Q, \label{s:si} \\
\anold{\0} & = \div (\W_{,\E}(\vp, \E(\u))) && \text{ in } Q, \label{s:E} \\
\W_{,\E}(\vp,\E(\u)) & = \C(\E(\u) - \bE - \vp \E^*) && \\
\vp(0) & = \vp_0 , \quad \si(0) = \si_0 && \text{ in } \Omega, \label{ic} \\
0 & = \pdnu \vp = \pdnu \mu, \quad \pdnu \si + \kappa \anold{(\si- \si_B) =0} && \text{ on } \Sigma := \Gamma \times (0,T), \label{bc1} \\
\u & = \0 && \text{ on } \Sigma_D := \Gamma_D \times (0,T), \label{bc2} \\
\W_{,\E}(\vp, \E(\u)) \bn & = \g && \text{ on } \Sigma_N := \Gamma_N \times (0,T)\anold{.} \label{bc3}
\end{alignat}
\end{subequations}
We refer the reader to \cite{GLS1,Lima1,Lima2} for more background on the model and related topics, while briefly describe the main components.  In the above, the variable $\vp$ denotes a phase field parameter that serves to distinguish between the two different types of cellular material in the mixture, with tumour cells occupying the region $\{\vp = 1\}$ and non-tumour cells in the region $\{\vp = -1\}$.  The subsystem \eqref{s:vp}-\eqref{s:mu} constitutes a Cahn--Hilliard type equation, where $\mu$ is the associated chemical potential. \anold{Coupled to this is a reaction-diffusion equation \eqref{s:si} for a nutrient $\sigma$, as well as 
a quasistatic linear elasticity system \eqref{s:E} with displacement $\u$ and symmetric strain tensor $\E(\u) := \frac{1}{2}( \nabla \u + (\nabla \u)^{\top})$.}  We mention that there are certain cases where the nutrient evolves quasistatically, which is covered by the case $\beta = 0$.  The terms $\W_{,\vp}(\vp,\E(\u))$ and $\W_{,\E}(\vp, \E(\u))$ are partial derivatives of the elastic energy $\W(\vp, \E(\u))$ with respect to its first and second arguments, respectively, and for this work we consider the choice
\begin{align*}
\W(\vp, \E(\u)) = \frac{1}{2}(\E(\u) - \bE - \vp \E^*) : \C (\E(\u) - \bE - \vp \E^*),
\end{align*}
where $\C$ is a constant, symmetric and positive definite elasticity tensor satisfying the usual symmetry conditions, and the phase-dependent stress-free strain $\bE(\vp)$ under Vegard's law is given by the linear ansatz $\bE(\vp) = \bE + \vp \E^*$ with constant symmetric second order tensors $\bE$ and $\E^*$.  Furthermore, in \eqref{s:mu} the directed movement of cells by chemotaxis is captured by the term $- \chi \sigma$, with $\chi \geq 0$ playing the role of chemotactic sensitivity \cite{GLSS}, while the term $\Psi'(\vp)$ is the derivative of a double-well potential $\Psi(\vp)$ with equal minima at $\vp = \pm 1$.  In our setting this term plays the role of cellular adhesion that leads to the development of regions of tumour and non-tumour cells well-separated by interfacial layers described by the set $\{ -1 < \vp < 1 \}$. 

For boundary conditions we subdivide the boundary $\Gamma$ into the partition
\begin{align*}
\Gamma = \overline{\Gamma_D} \cup \overline{\Gamma_N} \quad \text{ such that } \quad \Gamma_D \cap \Gamma_N = \emptyset.
\end{align*}
Both portions are assumed to be relatively open and to have positive Hausdorff measures, and on the portion $\Gamma_D$, representing a rigid structure of the tumour environment such as bone, the displacement $\u$ is set to be zero, and on the complement portion $\Gamma_N$, the normal component of the stress tensor $\W_{,\E}$ is equal to some given load $\g$ provided by a fixed source.  Meanwhile, \eqref{bc1} highlights that the cellular diffusive flux $\pdnu \mu$ is zero across the boundary, and for $\kappa > 0$ the nutrient flux $\pdnu \si$ is proportional to the difference between a nutrient source $\si_B$ from nearby capillaries and the nutrient level at the boundary.  The case of a zero nutrient diffusive flux is covered by the choice $\kappa = 0$.  

Lastly, the source term $U(\vp, \si, \E(\u))$ in \eqref{s:vp} captures cellular growth that can be influenced by nutrient concentration and mechanical stress.  The example we will use is
\begin{align*}
U(\vp, \si, \E(\u)) = \lambda_p \si f(\vp) g(\W_{,\E}(\vp,\E(\u))) - (\lambda_a + m(t)) k(\vp),
\end{align*}
where $\lambda_p \geq 0$, $\lambda_a \geq 0$ are constant proliferation and apoptosis (cell death) rates, and $f$, $g$ and $k$ are Lipschitz and bounded functions.  For instance, we can model the proliferation and apoptosis of tumour cells only by prescribing the conditions $f(1) = k(1) = 1$, $f(-1) = k(-1)= 0$, \an{see, e.g.,}~\cite{GLSS}.  Meanwhile, to account for the effect of reduced proliferation due to increase in mechanical stress \cite{Byrne,Cheng,Helm,Sty}, we may consider as a motivating example the function $g: \R^{d \times d} \to \R$ defined as
\begin{align}\label{eg:g}
g(\bm{A}) = \frac{1}{\sqrt{1+|\bm{A}|^2}} \text{ for } \bm{A} \in \R^{d \times d},
\end{align}
where $|\bm{A}|$ is the Frobenius norm of the matrix $\bm{A}$, so that as the magnitude of the stress $\W_{,\E}(\vp, \E(\u))$ increases, the effects of the proliferation term $\lambda_p \si f(\vp) g(\W_{,\E}(\vp, \E(\u)))$ become less significant.  
\anold{This is different to the choice considered in \cite{GLS1} as the derivation of optimal conditions in our present contribution requires a differentiable $g$.}
What is not present in the previous work \cite{GLS1} is the coefficient $m(t)$, and when paired with $k(\vp)$ we use the product $m(t) k(\vp)$ to model a cytotoxic drug-induced decrease in tumour proliferation.  A motivating example for $m(t)$ from \cite{ColliGomez} is
\begin{align}\label{intro:m}
m(t) = \sum_{i=1}^{n} d_c e^{-\frac{t - T_i}{\tau}} H(t-T_i),
\end{align}
with drug dosage $d_c$, drug delivery times $T_i$ for $i = 1, \dots, n$, where $n$ is the number of chemotherapy cycles, $\tau$ denoting the mean lifetime of the drug and $H$ is the Heaviside function.  After the $\anold{i}$'th infusion, the effect of the drug decreases exponentially until the next infusion at time $T_{\anold{i}+1}$.  For drugs with sufficiently short mean lifetime $\tau$, or with large enough infusion gap $T_i - T_{i-1}$, \todo{there are certain time intervals where the coefficient $m$ is close to zero}. 

Similarly, the source term $S(\vp, \si)$ in \eqref{s:si} accounts for nutrient consumption and transport to and from external capillaries. The example we will use is of the form
\begin{align*}
S(\vp, \si) = - h(\vp) (\lambda_c \si - s(t))  + B(\si_c - \si) 
\end{align*}
with constant consumption rate $\lambda_c \geq 0$, capillary supply rate $B\geq 0$, capillary nutrient concentration $\si_c$ and a Lipschitz, bounded function $h$.  For instance, we can model nutrient consumption only by the tumour cells by prescribing the conditions $h(1) = 1$ and $h(-1) = 0$.  A new element absent from \cite{GLS1} is the coefficient $s(t)$, which models the reduction in nutrient supply caused by antiangiogenic therapy, and in \cite{ColliGomez} a similar form to \eqref{intro:m} is proposed for $s(t)$, meaning that under suitable conditions, the coefficient $s(t)$ \todo{take values close to zero} for certain time intervals.

It is common to prescribe cytotoxic drugs in chemotherapy that serve to disrupt the cellular division process and promote apoptosis, but tumours can overcome these effects by developing drug resistance or by generating new vasculatures through angiogenesis to obtain nutrients that compensate any loss of mass from chemotherapy.  Therefore, in certain situations, it is of interest to combine two or more different therapies so that their joint effect can account for more mechanisms that allows tumours to avoid complete elimination, and have an overall larger positive impact on the treatment than the individual monotherapies.  Unfortunately, the results of various experimental and clinical studies (see \cite{Ma} and the references cited therein) have not produced clear guidelines on how to proceed with combined therapies, in part due to the multitude of drugs presently available and patient-specific interactions of multiple drugs.  Hence, mathematicians and physicians have turn towards the framework of optimal control to infer protocols, dosages and timings that maximise tumour reduction and minimise harmful side-effects \cite{Jarrett,Led,Les,Mellal,Panetta}.  To contribute to this effort, we study an optimal control problem with the model \eqref{state} as the state system, and as controls we work with the boundary nutrient supply $w_1 = \si_B$, the cytotoxic coefficient $w_2 = m(t)$ and the antiangiogentic coefficient $w_3 = s(t)$.  The cost functional we consider is
\begin{align}\label{opt}
\notag & J(\vp, \u, w_1, w_2, w_3) \\
\notag &\quad := \frac{\alpha_\Omega}{2} \| \vp(T) - \vp_\Omega \|_{L^2(\Omega)}^2 \anold{+ \frac{\alpha_Q}{2} \|\vp - \vp_Q \|^2_{L^2(Q)} +  \frac{\alpha_{\E}}{2}\int_{Q}}  n(x,\vp) |\W_{,\E}(\vp, \E(\u))|^2\dx \dt \\
\notag & \qquad + \frac{\gamma_1}{2} \| w_1 \|_{L^2(\Sigma)}^2 + \frac{\gamma_2}{2} \| w_2 \|_{L^2(0,T)}^2 + \frac{\gamma_3}{2} \| w_3 \|_{L^2(0,T)}^2 \\
& \qquad + \gamma_4 \| w_2 \|_{L^1(0,T)} + \gamma_5 \| w_3 \|_{L^1(0,T)}.
\end{align}
It is composed of the standard tracking-type with weights $\alpha_Q$, $\alpha_\Omega \geq 0$ and target functions $\vp_Q : Q \to \R$ and $\vp_\Omega : \Omega \to \R$, and $L^2$-regularisations for the optimal controls $w_1 = \si_B$, $w_2 = m(t)$ and $w_3 = s(t)$ with corresponding weights $\gamma_1, \gamma_2,\gamma_3 \geq 0$.  
\anold{Let us stress that the controls $w_2$ and $w_3$ are solely functions of time and are spatially constant.}
Compared to previous works on the optimal control with phase field tumour models, we have the presence of a term involving the the square of the stress $\W_{,\E}(\vp, \E(\u))$ weighted by a non-negative coefficient $n(x,\vp)$ and constant $\alpha_\E \geq 0$.  Due to the role of mechanical stresses on enhancing tumour growth, we are motivated to minimise stress accumulating in a certain region of the domain, such as important organs (by taking $n(x,\vp) = \chi_{D}(x)$ for a subregion $D \subset \Omega$ where $\chi_D$ is the characteristic function of the set $D$), or in certain parts of the tumour microenvironment whose location can be encoded with the help of the phase field variable $\vp$.  One example is a function $n(x,\vp) = \max(0,\min(1,\frac{1}{2}(1-\vp)))$, so that $n$ is non-zero in the non-tumour region $\{\vp = -1 \}$ and is zero in the tumour region $\{\vp = 1\}$.

Moreover, we prescribe $L^1$-regularisations of the drug concentrations $w_2$ and $w_3$ with weights $\gamma_4, \gamma_5 \geq 0$ to the cost functional \eqref{opt}, with the aim of using the combination of both $L^2$ and $L^1$\anold{-}regularisations to show sparsity, see Theorem \ref{thm:spas} below for the precise formulation.  A first work on sparse controls with phase field tumour models is \cite{ST}, where directional sparsity \cite{Herzog} of the controls, i.e., sparsity w.r.t.~space or w.r.t.~time, is shown.  Our reasoning for such considerations is in part motivated by the common practice that chemotherapies should be administrated to the patient only in very short periods of time to avoid adverse side-effects.  In the simulations performed in \cite{ColliOpt}, where an optimal control problem of a similar nature is studied with only $L^2$-regularisation terms in \anold{the} cost functional, the optimal cytotoxic drug concentration is positive over the treatment period.  In practical applications this translates \an{to} prolonged exposure and subsequent accumulation of the drugs in the body, potentially invoking damaging side-effects and may even entail a premature abortion of the medical treatment.

The goal of this paper is to study the optimal control problem \eqref{opt} subjected to the state system \eqref{state}.  Building on the well-posedness results established in \cite{GLS1}\an{,} we prove the existence of a minimiser and derive \anold{first-order} optimality conditions.   Our main result is sparsity of the optimal drug concentrations, brought about by the convex non-differentiable $L^1$-terms in \eqref{opt}.  Compared to \cite{ST}, our analysis includes the elasticity interactions in \eqref{s:mu} and in \eqref{opt}, covering both cases of $\beta > 0$ and $\beta = 0$ in \eqref{s:si} in a uniform manner, as well as different sparsity conditions for non-negative drug concentrations $m(t)$ and $s(t)$. 

We comment that tracking terms involving the nutrient concentration $\si$, such as $\| \si - \si_Q \|_{L^2(Q)}^2$ or $\| \si(T) - \si_\Omega \|_{L^2(\Omega)}^2$ if $\beta > 0$, can also be inserted into the cost functional.  Other terms of interest include the total tumour volume at time $T$ given by the spatial integral of $\frac{1}{2}(1+\vp(T))$, and thanks to the well-posedness result for \eqref{state} (see Theorem \ref{thm:state} below) we can consider other parameters as control variables, for instance the capillary nutrient concentration $\si_c$, the boundary load $\g$, the initial data $\vp_0$, $\si_0$ and \anold{the coefficients $\chi,\lambda_p,\lambda_a,\lambda_c$ in \eqref{state} in the context of parameter estimation \cite{FLS,KL}, and} even the magnitude of the treatment time $T$ \cite{Cava,GLR,SigTime}.  One can also consider spatially varying drug concentrations $m(t,x)$ and $s(t,x)$ as in \cite{ColliOpt,ST}, and the corresponding analysis to adapt to these elements would only require minor and straightforward modifications. 

%\anold{Summing up, it is the aim of this paper to solve the following oiptimal control problem:
%\begin{align*}
%	{(CP)} \quad &\text{Minimise the cost functional $J$ subject to the state system \eqref{state}}
%		\\ &\text{and to the control constraint ${\bf w}=(w_1,w_2,w_3) \in \U_{ad}$.}
%\end{align*}
%}

The rest of the paper is organised as follows: We recall previous results in Section \ref{sec:pre}, and the existence of a minimiser to the optimal control problem is shown in Section \ref{sec:min}.  Section \ref{sec:opt} is devoted to the derivation of \anold{first-order} optimality conditions, and in Section \ref{sec:spar} we discuss the sparsity of controls.

\section{Mathematical setting and previous results}\label{sec:pre}
\subsection{Notation and useful preliminaries}
The standard Lebesgue and Sobolev spaces over $\Omega$ are denoted by $L^p := L^p(\Omega)$ and $W^{k,p} := W^{k,p}(\Omega)$ for any $p \in [1,\infty]$ and $k >0$, with corresponding norms $\no{\cdot}_{L^p}$ and $\no{\cdot}_{W^{k,p}}$.  In the case $p = 2$, these become Hilbert spaces and we use the notation $H^k := H^k(\Omega) = W^{k,2}(\Omega)$ and the norm $\no{\cdot}_{H^k}$.  For any Banach space $Z$, we denote its dual by $Z'$, and the corresponding duality pairing by $\inn{\cdot}{\cdot}_Z$.  When $Z = H^1(\Omega)$, we use the notation $\inn{\cdot}{\cdot} = \inn{\cdot}{\cdot}_{H^1}$.  The $L^2(\Omega)$-inner product is denoted by $(\cdot,\cdot)$, while the $L^2(\Gamma)$ and $L^2(\GN)$-inner products are denoted by $(\cdot,\cdot)_{\Gamma}$ and $(\cdot,\cdot)_{\GN}$, respectively.  %For Lebesgue spaces and Sobolev spaces over $\Gamma$, we use the notation $L^p_\Gamma := L^p(\Gamma)$ and $W^{k,p}_\Gamma := W^{k,p}(\Gamma)$, respectively, along with the norms $\no{\cdot}_{L^p_\Gamma}$ and $\no{\cdot}_{W^{k,p}_\Gamma}$.  
We define the Sobolev space
$H^2_{\bm{n}}(\Omega)$ as the set $\{f \in H^2(\Omega) \, : \, \pdnu f = 0 \text{ on } \Gamma\}$, and for the displacement $\u$, we introduce the following function
space:
\begin{align*}
X (\Omega) & := \{ \mathbf{f} \in H^1(\Omega)^d \, : \, \mathbf{f} {\vrule height 5pt depth 4pt\,}_{\GD} = \0 \},
\end{align*}
where by \cite[Thm.~6.15-4, pp.~409--410]{C}, a Korn-type inequality is valid in $X(\Omega)$, i.e., there exists a constant $C_K > 0$ such that 
\begin{align}\label{Korn}
\no{\u}_{H^1} \leq C_K \no{\E(\u)}_{L^2} \quad \forall\, \u \in X(\Omega).
\end{align}

\subsection{Assumptions and previous results}
In this work we make the following assumptions regarding parameters and functions in the model:
\begin{enumerate}[label=$(\mathrm{A \arabic*})$, ref = $\mathrm{A \arabic*}$]
\item \label{ass:beta} Let $\g\in L^2(\GN)^d$ and $\si_B \in L^\infty(\Sigma)$ be given, while $\beta, B, \kappa, \chi, \lambda_a, \lambda_p, \lambda_c, \si_c$ are non-negative constants such that at least one of $\{B, \kappa\}$ is non-zero if $\beta = 0$.  Moreover, $\bE$ and $\E^*$ are constant symmetric second order tensors while $\C$ is a constant symmetric, positive definite fourth order tensor satisfying 
\begin{align*}
 \E : \C\E \geq  c_0 |\E|^2
\end{align*}
for all symmetric second order tensors $\E \in \R^{d\times d}_{\rm sym}$ with a positive constant $c_0$.
\item \label{ass:pot} The potential $\Psi = \Psi_1 + \Psi_2$ is a non-negative function, $\Psi_i \in C^3(\R)$ for $i=1,2$, with a convex non-negative function $\Psi_1$ such that for all $r, z \in \R$,
\begin{align*}
|\Psi_2''(r)| \leq C, & \quad |\Psi_1'''(r)| \leq C( 1 + |r|), \\
|\Psi'(r) - \Psi'(z)| & \leq C \big ( 1 + |r|^2+|z|^2 \big ) |r-z|, \\
|\Psi''(r) - \Psi''(z) | & \leq C \big ( 1 + |r| + |z| \big ) |r-z|,
\end{align*}
%\item \label{ass:pot} The potential $\Psi \in C^3(\R)$ is a non-negative function satisfying a convex-concave decomposition $\Psi = \Psi_1 + \Psi_2$ with convex non-negative function $\Psi_1$ and concave function $\Psi_2$ such that for all \anold{$r, z \in \R$,
%\begin{align*}
%|\Psi_2''(r)| \leq C, & \quad |\Psi_1'''(r)| \leq C( 1 + |r|), \\
%|\Psi'(r) - \Psi'(z)| & \leq C \big ( 1 + |r|^2+|z|^2 \big ) |r-z|, \\
%|\Psi''(r) - \Psi''(z) | & \leq C \big ( 1 + |r| + |z| \big ) |r-z|,
%\end{align*}}
for some positive constant $C$.
\item \label{ass:f} The functions $f$, $g$, $h$ and $k$ satisfy $f,h,k \in W^{1,\infty}(\R)$, $g \in W^{1,\infty}(\R^{d \times d}, \R)$, with Lipschitz constants that shall be denoted by a common symbol $L > 0$. Furthermore, we assume $h$ is non-negative. 
\item \label{ass:ms} The cytotoxic and antiangiogenic functions satisfy $m, s \in L^\infty(0,T)$.
\item \label{ass:ini} The initial conditions satisfy $\vp_0 \in H^1(\Omega)$ and $\si_0 \in L^2(\Omega)$ with $0 \leq \si_0 \leq M := \max (\si_c, \| \si_B \|_{L^\infty(\Sigma)})$ a.e.~in $\Omega$.
\end{enumerate}
To study the optimal control problem we will need the following:
\begin{enumerate}[resume*]
\item \label{ass:opt} We assume $f$, $h$, $k \in \an{C^2(\R)\cap}W^{2,\infty}(\R)$, $g \in \an{C^2(\R^{d \times d}, \R)\cap}W^{2,\infty}(\R^{d \times d}, \R)$ and 
\linebreak $n: \Omega \times \R \to \R$ is a Carath\'eodory function such that $n(x,\cdot) \in C^1(\R) \cap W^{1,\infty}(\R)$ is non-negative for a.e.~$x \in \Omega$.
\item \label{ass:gamma} 
%\todo{\anold{A}t least one of $\{\alpha_Q, \alpha_\Omega, \alpha_\E\}$ is positive, and at least one of $\{ \gamma_1, \gamma_2, \gamma_3\}$ is positive.  If $\{\gamma_4, \gamma_5\}$ are positive then $\{\gamma_2, \gamma_3\}$ are positive.}
The coefficients $\alpha_Q, \alpha_\Omega, \alpha_\E, \gamma_1, \gamma_2, \gamma_3, \gamma_4, \gamma_5$ are non-negative constants, not all zero.
Moreover, $\gamma_2$ is positive if $\gamma_4$ is positive and $\gamma_3$ is positive when $\gamma_5$ is positive.
\item \label{ass:target}
The objective data $\vp_Q : Q \to \R$, $\vp_\Omega : \Omega \to \R$ are given functions satisfying $\vp_Q \in L^2(Q), \vp_\Omega \in L^2(\Omega).$
\end{enumerate}
\an{It is worth noting that the conditions expressed in \eqref{ass:pot} are fulfilled by the classical quartic potential $\Psi(r)= \tfrac 14 (r^2-1)^2$.}
For the motivating example \eqref{eg:g}, for any $\bm{A} \in \R^{d \times d}$, we use the notation $g'(\bm{A})$ to denote the tensor derivative of $g$, i.e., $g'(\bm{A})$ is a second order tensor with 
\begin{align*}
[g'(\bm{A})]_{ij} = \frac{\pd}{\pd \bm{A}_{ij}} g(\bm{A}) = - \frac{\bm{A}_{ij}}{(1+|\bm{A}|^2)^{3/2}} \quad \text{ for } 1 \leq i,j \leq d.
\end{align*}
On the other hand, we use the notation $g''(\bm{A})$ to denote the Hessian of $g$, which is a fourth order tensor defined as
\begin{align*}
[g''(\bm{A})]_{ijkl} =  \frac{\pd^2}{\pd \bm{A}_{ij} \pd \bm{A}_{kl}} g(\bm{A}) = \frac{3 \bm{A}_{ij} \bm{A}_{kl}}{(1+|\bm{A}|^2)^{5/2}} - \frac{\delta_{ik} \delta_{jl}}{(1+|\bm{A}|^2)^{3/2}} \quad \text{ for } 1 \leq i,j,k,l \leq d.
\end{align*}
Hence, it is easy to see that for any $\bm{A} \in \R^{d \times d}$, both $|[g'(\bm{A})]_{ij}|$ and $|[g''(\bm{A})]_{ijkl}|$ are bounded for all $1 \leq i,j,k,l \leq d$.  In particular, we can infer that $|g'(\W_{,\E}(\vp,\E(\u)))|$ and $|g''(\W_{,\E}(\vp,\E(\u))|$ are bounded a.e.~in $Q$ thanks to \eqref{ass:f}.  For the rest of the paper, the parameters $\beta$, $\g$, $B$, $\kappa$, $\chi$, $\lambda_a$, $\lambda_p$, $\lambda_c$, $\si_c$, $\C$, $\bE$, $\E^*$, as well as initial data $\vp_0$ and $\si_0$ are kept fixed.  We then introduce the notation
\begin{align*}
\bm{w} = (w_1, w_2, w_3),
\end{align*}
and the set of admissible controls $\U_{ad} = \U_{ad}^{(1)} \times \U_{ad}^{(2)} \times \U_{ad}^{(3)}$ as
\begin{equation}\label{Uad}
\begin{aligned}
\U_{ad}^{(1)} & :=  \{ w_1 \in L^\infty(\Sigma) \, : \, \underline{w}_1 \leq w_1 \leq \overline{w}_1 \text{ a.e.~on } \Sigma \}, \\
\U_{ad}^{(i)} & := \{ w_i \in L^\infty(0,T) \, : \, \underline{w}_i \leq w_i \leq \overline{w}_i \text{ a.e.~in } (0,T) \} \text{ for } i = 2,3,
\end{aligned}
\end{equation}
with fixed $\underline{w}_1$, $\overline{w}_1 \in L^\infty(\Sigma)$,
$\underline{w}_2$, $\overline{w}_2$, $\underline{w}_3$, $\overline{w}_3 \in L^\infty(0,T)$
such that
\an{$\underline{w}_1 \leq \overline{w}_1$ a.e.~on $\Sigma$, $\underline{w}_i \leq \overline{w}_i$ a.e.~in  $(0,T)$ for $i=2,3$,
and $\max(\| \underline{w}_1 \|_{L^\infty(\Sigma)},\| \overline{w}_1 \|_{L^\infty(\Sigma)}) \leq M$.}
%$\| \underline{w}_1 \|_{L^\infty(\Sigma)} \leq M$ and $\| \overline{w}_1 \|_{L^\infty(\Sigma)} \leq M$, 
%$\underline{w}_2$, $\overline{w}_2$, $\underline{w}_3$, $\overline{w}_3 \in L^\infty(0,T)$.  
The admissible set of controls $\U_{ad}$ is a non-empty, closed and convex subset of $\U := L^2(\Sigma) \times L^2(0,T) \times L^2(0,T)$, and we can find a positive constant $R$ such that 
\begin{align*}
\U_R := \{ (w_1, w_2, w_3) \in \U \, : \, \| w_1 \|_{L^2(\Sigma)} + \| w_2 \|_{L^2(0,T)} + \| w_3 \|_{L^2(0,T)} < R \} \supset \U_{ad}.
\end{align*}

\noindent The following result concern\anold{s the} well-posedness of the model \eqref{state}.
\begin{thm}\label{thm:state}
Under \eqref{ass:beta}-\eqref{ass:ini} there exists a unique weak solution $\anold{(\vp, \mu, \si, \u)}$ to \eqref{state} and an exponent $p > 2$ such that
\begin{align*}
\vp & \in H^1(0,T;H^1(\Omega)') \cap L^\infty(0,T;H^1(\Omega)) \cap L^2(0,T;H^2_{\bm{n}}(\Omega)), \\
\mu & \in L^2(0,T;H^1(\Omega)), \\
\si & \in L^2(0,T;H^1(\Omega)) \cap L^\infty(0,T;L^\infty(\Omega)) \text{ with } 0 \leq \si \leq M \text{ a.e.~in } Q, \\
& \text{ and } \si \in H^1(0,T;H^1(\Omega)') \cap L^\infty(0,T;L^2(\Omega)) \text{ if } \beta > 0, \\
\anold{\u} & \in \anold{L^\infty(0,T; \X \cap W^{1,p}(\Omega)),}
\end{align*}
with $\vp(0) = \vp_0$ in $L^2(\Omega)$ as well as $\si(0) = \si_0$ in $L^2(\Omega)$ if $\beta > 0$, and 
\begin{subequations}
\begin{alignat}{2}
0 & = \int_0^T \inn{\vp_t}{\zeta} + (\nx \mu, \nx \zeta) - (U(\vp, \si, \E(\u)), \zeta) \dt, \label{w:1} \\
 0 &= \int_0^T (\mu, \zeta) - (\nx \vp, \nx \zeta) - (\an{\Psi}'(\vp), \zeta) + \chi (\si, \zeta) - (\W_{,\vp}(\vp, \E(\u)), \zeta) \dt, \label{w:2} \\
0 & = \int_0^T \beta \inn{\si_t}{\zeta} + (\nx \si, \nx \zeta) + \kappa(\si - \si_B,\zeta)_{\Gamma} - (S(\vp,\si), \zeta) \dt, \label{w:3} \\
0 & = \int_0^T (\C(\E(\u) - \bE - \vp \E^*), \nx \bet) -( \g , \bet )_{\GN} \dt 
\label{w:4}
\end{alignat}
\end{subequations}
for all $\zeta \in L^2(0,T;H^1(\Omega))$ and $\bet \in L^2(0,T; \X)$.  Moreover, there exists a positive constant $K_1$ independent of $\beta$ such that 
\begin{equation}\label{state:bdd}
\begin{aligned}
& \| \vp \|_{H^1(0,T;H^1(\Omega)') \cap L^\infty(0,T;H^1) \cap L^2(0,T;H^2)} + \| \mu \|_{L^2(0,T;H^1)} \\
& \quad + \| \si \|_{L^2(0,T;H^1)} + \beta^{\frac{1}{2}} \| \si \|_{H^1(0,T;H^1(\Omega)') \cap L^\infty(0,T;L^2)} \\
& \quad + \| \u \|_{L^\infty(0,T;\X \cap W^{1,p}(\Omega))} \leq K_1 \big ( 1 + \beta^{\frac{1}{2}} \| \si_0 \|_{L^2} \big ).
\end{aligned}
\end{equation}
For any pair $\{\anold{(\vp_i, \mu_i, \si_i, \u_i)}\}_{i=1,2}$ of weak solutions to \eqref{state} corresponding to data 
\begin{align*}
\anold{\{(\vp_{0,i}, \si_{0,i}, \g_{i}, \si_{B,i}, m_{i}, s_i)\}_{i=1,2},}
\end{align*}
there exists a positive constant $K_2$ independent of the differences of $\{\anold{(\vp_i, \mu_i, \si_i, \u_i)}\}_{i=1,2}$  and $\beta$ such that
\begin{equation}\label{state:ctsdep}
\begin{aligned}
& \| \vp_1 - \vp_2 \|_{L^\infty(0,T;H^1) \cap L^2(0,T;H^2)} + \| \mu_1 - \mu_2 \|_{L^2(0,T;H^1)} + \| \si_1 - \si_2 \|_{L^2(0,T;H^1)} \\
& \qquad + \beta^{\frac{1}{2}} \| \si_1 - \si_2 \|_{L^\infty(0,T;L^2)} + \| \u_1 - \u_2 \|_{L^\infty(0,T;\X)} \\
& \quad \leq K_2 \Big ( \| \vp_{0,1} - \vp_{0,2} \|_{H^1} + \beta^{\frac{1}{2}} \| \si_{0,1} - \si_{0,2} \|_{L^2} + \| \g_1 - \g_2 \|_{L^2(\GN)} \Big ) \\
& \qquad + K_2 \Big ( \| \si_{B,1} - \si_{B,2} \|_{L^2(\Sigma)}
% + \anold{|\si_{c,1} - \si_{c,2} |} \Big ) \\
%& \qquad + K_2 \Big ( 
 \anold{+ \| m_1 - m_2 \|_{L^2(0,T)} + \| s_1 - s_2 \|_{L^2(0,T)} \Big ).}
\end{aligned}
\end{equation}
\end{thm}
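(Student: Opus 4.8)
The plan is to decouple the quasistatic elasticity from the evolutionary part, solve the former by a linear elliptic argument and then treat the reduced Cahn--Hilliard/nutrient system by a Galerkin scheme, following the strategy of \cite{GLS1} but accounting for the new time-dependent coefficients $m$, $s$ and the differentiable $g$. First I would observe that for fixed $\vp \in L^2(\Omega)$ the weak elasticity identity \eqref{w:4} defines a bilinear form $(\u,\bet) \mapsto \int_\Omega \C\E(\u):\E(\bet)\dx$ which, by the positive definiteness of $\C$ from \eqref{ass:beta} together with the Korn inequality \eqref{Korn}, is coercive on $\X$; hence Lax--Milgram yields a unique $\u = \u[\vp] \in \X$ depending affinely and Lipschitz-continuously on $\vp$, with $\no{\u}_{H^1} \le C(1 + \no{\vp}_{L^2} + \no{\g}_{L^2(\GN)})$. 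The improved integrability $\u \in W^{1,p}(\Omega)$ for some $p>2$ follows from higher-integrability (Meyers/Gr\"oger-type) estimates for linear elliptic systems with mixed Dirichlet--Neumann conditions, which is precisely where the $C^{1,1}$-regularity or convexity of $\Gamma$ and the positive measures of $\GD, \GN$ are used. Substituting $\u[\vp]$ turns $\W_{,\vp}(\vp,\E(\u))$ into a Lipschitz nonlinearity of $\vp$ alone, and reduces \eqref{state} to a coupled Cahn--Hilliard/reaction--diffusion system.

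Next I would set up a Faedo--Galerkin approximation in the eigenfunctions of the Neumann Laplacian for $\vp,\mu,\si$, and derive a priori estimates from the natural energy structure. Testing \eqref{w:1} with $\mu$ and \eqref{w:2} with $\vp_t$ and combining the two, one obtains the dissipation of the free energy $\int_\Omega (\tfrac12|\nx\vp|^2 + \Psi(\vp) + \W(\vp,\E(\u)))\dx$, while testing \eqref{w:3} with $\si$ produces the term $\tfrac{\beta}{2}\tfrac{d}{dt}\no{\si}_{L^2}^2 + \no{\nx\si}_{L^2}^2$ plus boundary and mass contributions. The boundedness of $f,g,h,k$ in \eqref{ass:f}, together with $m,s \in L^\infty(0,T)$ from \eqref{ass:ms}, makes the source terms $U$ and $S$ bounded perturbations that are absorbed by Young's inequality and controlled via Gronwall. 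The delicate point is to keep all constants independent of $\beta$ and to recover an $H^1$-bound on $\si$ even when $\beta = 0$: here coercivity in $H^1$ is supplied by the boundary term $\kappa\no{\si}_{L^2(\Gamma)}^2$ or the mass term coming from $B(\si_c-\si)$ via a Poincar\'e inequality, which is exactly why \eqref{ass:beta} requires at least one of $\{B,\kappa\}$ to be nonzero when $\beta=0$. The pointwise bounds $0\le\si\le M$ follow from a Stampacchia truncation argument exploiting the sign structure of $S$ and the bound $\si_c, \no{\si_B}_{L^\infty(\Sigma)} \le M$ implicit in \eqref{ass:ini}. Elliptic regularity applied to \eqref{s:mu} then upgrades $\vp$ to $L^2(0,T;H^2_{\bm n}(\Omega))$.

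Having these $\beta$-uniform bounds, I would pass to the limit: Aubin--Lions--Simon compactness (using $\vp_t \in L^2(0,T;H^1(\Omega)')$ against $\vp \in L^2(0,T;H^2)$, and the analogous control of $\si$ when $\beta>0$; for $\beta=0$ the nutrient equation is genuinely elliptic and strong convergence follows from the $H^1$-bound and continuity of its solution operator) yields strong convergence of $\vp$ and $\si$, which suffices to identify all nonlinear limits, the elastic coupling passing through the continuity of $\vp \mapsto \u[\vp]$. This produces the weak solution with the asserted regularity and the bound \eqref{state:bdd}.

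Finally, uniqueness and the continuous dependence estimate \eqref{state:ctsdep} are obtained by writing the system for the differences of two solutions, testing the differences of \eqref{w:1}--\eqref{w:3} with suitable multiples of $\vp_1-\vp_2$, $\mu_1-\mu_2$, $\si_1-\si_2$ and the difference of \eqref{w:4} with $\u_1-\u_2$, and closing the estimate by Gronwall. The Lipschitz continuity of $\vp \mapsto \u[\vp]$ controls $\no{\u_1-\u_2}_{L^\infty(0,T;\X)}$ by the $\vp$-difference, the Lipschitz bounds in \eqref{ass:pot} and \eqref{ass:f} handle the potential and source nonlinearities, and the new data differences enter only through the bounded factors $k(\vp)$ and $h(\vp)$, producing the terms $\no{m_1-m_2}_{L^2(0,T)}$ and $\no{s_1-s_2}_{L^2(0,T)}$ on the right-hand side. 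I expect the main obstacle to be the simultaneous treatment of $\beta>0$ and $\beta=0$ with $\beta$-independent constants---in particular the $H^1$-coercivity of the nutrient equation in the degenerate case---together with the propagation of the $W^{1,p}$-regularity of $\u$ through the nonlinear chemical-potential coupling; both are inherited with only minor modifications from the analysis in \cite{GLS1}.
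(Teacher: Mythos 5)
Your proposal is correct and follows essentially the same route as the paper, which does not prove Theorem \ref{thm:state} in situ but defers existence to \cite[Sec.~3]{GLS1} and continuous dependence to \cite[Sec.~6]{GLS1}, noting only that the boundedness of $k$ and $h$ absorbs the new coefficients $m(t)$ and $s(t)$. Your outline (decoupling the quasistatic elasticity via Lax--Milgram with Korn's inequality and a Gr\"oger-type $W^{1,p}$ estimate for the mixed boundary value problem, Galerkin approximation with the free-energy dissipation estimate, Stampacchia truncation for $0 \leq \si \leq M$, $\beta$-uniform coercivity of the nutrient equation from $\kappa$ or $B$, Aubin--Lions, and a Gronwall argument for the differences) is precisely the strategy of that reference and of the paper's intended adaptation.
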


\begin{remark}
The proof of existence can be deduced analogously from \cite[Sec.~3]{GLS1}, and we comment that the subsequent constant $K_1$ in \eqref{state:bdd} is bounded uniformly in $(\si_B, m(t), s(t))$ when we restrict to the \an{open} set $\U_R$.  Whereas a minor modification of \cite[Sec.~6]{GLS1} using the boundedness of $k$ and $h$ yields the above continuous dependence assertion in the presence of the new coefficients $m(t)$ and $s(t)$.  Hence, we omit the details.
\end{remark}

\begin{remark}
A closer inspection of the proof in \cite[Sec.~5.2]{GLS1} allows us to deduce the \anold{further} regularity statement
\begin{align*}
\vp \in L^4(0,T;H^2_{\bm{n}}(\Omega)).
\end{align*}
We briefly sketch the argument.  Testing \eqref{s:mu} with $-\Lx \vp$, integrating by parts for the terms involving $\mu$ and $\Psi'(\vp)$, then using the convexity of $\Psi_1$, the bounds for $\Psi_2''$, the boundedness of $\si$, and the regularity $\vp \in L^\infty(0,T;H^1(\Omega))$ and $\u \in L^\infty(0,T;\X)$,
\begin{align*}
\tfrac{1}{2} \| \Lx \vp \|_{L^2}^2 & \leq \| \nabla \mu \|_{L^2} \| \nabla \vp \|_{L^2} + C \anold{ \|\nabla \vp \|_{L^2}^2} + C \| \si \|_{L^2}^2 + C \| \W_{,\vp}(\vp, \E(\u)) \|_{L^2}^2 \\
& \leq C \big ( 1 + \| \nabla \mu \|_{L^2} \big ).
\end{align*}
Squaring and integrating over $(0,T)$ yields that $\Lx \vp \in L^4(0,T;L^2(\Omega))$ and elliptic regularity gives the assertion.
\end{remark}

\section{The optimal control problem}\label{sec:min}
In this section we show \anold{that} there exists at least one minimiser to the optimal control problem \anold{minimising the cost functional \eqref{opt} with state system given by \eqref{state}}.  By Theorem \ref{thm:state}, we can define the control-to-state operator $\bS$ which assigns every admissible control $\bm{w} = (w_1, w_2, w_3) = (\si_B, m, s)$ the corresponding unique solution $(\vp, \mu, \si, \u)$ to \eqref{state}.  Namely, we have
\begin{align*}
\bS : \U_{ad} \subset \U_R \to \bY, \quad (w_1, w_2, w_3) \mapsto (\vp, \mu, \si, \u),
\end{align*}
where the solution space $\bY$ is defined\an{, according to Theorem \ref{thm:state},} as
\begin{align*}
\bY := \begin{cases}
H^1(0,T;H^1(\Omega)') \cap L^\infty(0,T;H^1(\Omega)) \cap L^2(0,T;H^2_{\bm{n}}(\Omega)) \times L^2(0,T;H^1(\Omega)) &  \\
\times H^1(0,T;H^1(\Omega)') \cap L^\infty(0,T;L^2(\Omega)) \cap L^2(0,T;H^1(\Omega)) \cap L^\infty(Q) & \\
\times L^\infty(0,T;\X \cap W^{1,p}(\Omega)) \quad \text{ if } \beta > 0, \\
\\
H^1(0,T;H^1(\Omega)') \cap L^\infty(0,T;H^1(\Omega)) \cap L^2(0,T;H^2_{\bm{n}}(\Omega)) \times L^2(0,T;H^1(\Omega)) &  \\
\times L^2(0,T;H^1(\Omega)) \cap L^\infty(Q) & \\
\times L^\infty(0,T;\X \cap W^{1,p}(\Omega)) \quad \text{ if } \beta = 0\anold{.}
\end{cases}
\end{align*}
Denoting by $\bS_1(\bm{w}) = \vp$ the first component and by $\bS_4(\bm{w}) = \u$ the fourth component, we can define the reduced cost functional as
\begin{align*}
\J(\bm{w}) = J(\bS_1(\bm{w}), \bS_4(\bm{w}), \bm{w}).
\end{align*}
\begin{thm}
Under \eqref{ass:beta}-\eqref{ass:target}, there exists at least one minimiser $\bm{w} = (w_1, w_2, w_3) \in \U_{ad}$ to the optimal control problem
\begin{align*}
\min_{(z_1, z_2, z_3) \in \U_{ad}} \J(z_1, z_2, z_3).
\end{align*}
\end{thm}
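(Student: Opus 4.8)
The plan is to use the direct method of the calculus of variations. Under \eqref{ass:gamma}--\eqref{ass:target} every summand in \eqref{opt} is non-negative, so $\J$ is bounded below and the infimum $d := \inf_{\bm{w} \in \U_{ad}} \J(\bm{w}) \geq 0$ is finite. Fix a minimising sequence $\{\bm{w}_n\}_{n \in \N} \subset \U_{ad}$ with $\J(\bm{w}_n) \to d$. Since $\U_{ad} \subset \U_R$ is bounded in the Hilbert space $\U$, after passing to a subsequence (not relabelled) we have $\bm{w}_n \rightharpoonup \bm{w}_* = (w_{1,*}, w_{2,*}, w_{3,*})$ weakly in $\U$, and as $\U_{ad}$ is convex and closed, hence weakly closed, the limit satisfies $\bm{w}_* \in \U_{ad}$. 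This $\bm{w}_*$ will be the candidate minimiser.

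Writing $(\vp_n, \mu_n, \si_n, \u_n) := \bS(\bm{w}_n)$, the uniform bound \eqref{state:bdd} (with $K_1$ independent of the controls on $\U_R$) shows these states are bounded in $\bY$. By Banach--Alaoglu I extract weak/weak-$*$ limits $\mu_n \rightharpoonup \mu_*$ in $L^2(0,T;H^1(\Omega))$, $\si_n \rightharpoonup \si_*$ in $L^2(0,T;H^1(\Omega))$, and $\u_n \rightharpoonup^* \u_*$ in $L^\infty(0,T;\X \cap W^{1,p}(\Omega))$. For $\vp_n$ the Aubin--Lions--Simon lemma, applied with the bounds on $\vp_n$ in $L^2(0,T;H^2) \cap L^\infty(0,T;H^1)$ and on $\vp_{n,t}$ in $L^2(0,T;H^1(\Omega)')$, upgrades this to strong convergence $\vp_n \to \vp_*$ in $C([0,T];L^2(\Omega)) \cap L^2(0,T;H^1(\Omega))$ and a.e.\ in $Q$. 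The decisive additional step is to obtain strong convergence of the strains: since the elasticity identity \eqref{w:4} is affine in $(\vp, \E(\u))$, the weak limit $\u_*$ solves \eqref{w:4} with $\vp_*$, and testing the difference of the two identities with $\bet = \u_n - \u_*$ gives, by the ellipticity $c_0$ of $\C$ and Korn's inequality \eqref{Korn}, the estimate $\no{\E(\u_n) - \E(\u_*)}_{L^2(Q)} \leq C \no{(\vp_n - \vp_*)\E^*}_{L^2(Q)} \to 0$, so that $\u_n \to \u_*$ strongly in $L^2(0,T;\X)$. Consequently $\W_{,\E}(\vp_n, \E(\u_n)) \to \W_{,\E}(\vp_*, \E(\u_*))$ strongly in $L^2(Q)$, and by the continuity and boundedness of $\Psi'$, $f$, $g$, $h$, $k$ together with the a.e.\ convergence of $\vp_n$ and the weak convergence of $\si_n$, I can pass to the limit in the nonlinear terms of \eqref{w:1}--\eqref{w:3}; crucially these nonlinearities are only ever paired against fixed test functions, so weak convergence of $\si_n$ is sufficient (this is what handles the case $\beta = 0$, where $\si_n$ enjoys no time compactness). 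Passing to the limit in \eqref{w:1}--\eqref{w:4} and invoking the uniqueness statement of Theorem \ref{thm:state} identifies $(\vp_*, \mu_*, \si_*, \u_*) = \bS(\bm{w}_*)$.

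It remains to show $\J(\bm{w}_*) \leq \liminf_n \J(\bm{w}_n)$. The tracking terms in $\vp$ are convex and continuous, hence weakly lower semicontinuous, and the strong convergence $\vp_n \to \vp_*$ in $C([0,T];L^2(\Omega))$ handles them directly, including $\vp_n(T) \to \vp_*(T)$ in $L^2(\Omega)$. The quadratic regularisations $\tfrac{\gamma_i}{2}\no{w_i}^2$ and, importantly, the convex non-differentiable terms $\gamma_4 \no{w_2}_{L^1(0,T)}$ and $\gamma_5 \no{w_3}_{L^1(0,T)}$ are convex and continuous on $\U$, hence weakly lower semicontinuous, so they are controlled under $\bm{w}_n \rightharpoonup \bm{w}_*$. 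For the stress term, the strong convergence $\W_{,\E}(\vp_n, \E(\u_n)) \to \W_{,\E}(\vp_*, \E(\u_*))$ in $L^2(Q)$, combined with $n(x, \vp_n) \to n(x, \vp_*)$ a.e.\ and boundedly (using that $n(x, \cdot)$ is continuous and in $W^{1,\infty}(\R)$ by \eqref{ass:opt}), yields $n(x,\vp_n)|\W_{,\E}(\vp_n,\E(\u_n))|^2 \to n(x,\vp_*)|\W_{,\E}(\vp_*,\E(\u_*))|^2$ in $L^1(Q)$, so this term in fact converges. Collecting these facts gives $\J(\bm{w}_*) \leq \liminf_n \J(\bm{w}_n) = d$, whence $\bm{w}_*$ is a minimiser. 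I expect the main obstacle to be precisely the passage through the nonlinear stress dependence $g(\W_{,\E}(\vp,\E(\u)))$ and the stress term of the cost functional: the displacement $\u_n$ is a priori only weakly-$*$ convergent, and it is the coercivity/Korn argument on the linear elasticity system that promotes this to strong convergence of the strains and thereby makes the nonlinear coupling limits tractable.
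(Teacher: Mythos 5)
Your proposal is correct and follows the same overall skeleton as the paper (direct method: non-negativity, minimising sequence, uniform bounds from \eqref{state:bdd}, weak/weak-$*$ compactness, Aubin--Lions for $\vp_n$, identification of the limit state, weak lower semicontinuity). The one place where you genuinely diverge is the treatment of the elasticity coupling. The paper keeps only \emph{weak} convergence of $\E(\u_n)$ in $L^2(Q)$, combines it with the strong convergence $\sqrt{n(\vp_n)}\,\bet \to \sqrt{n(\vp)}\,\bet$ to get weak $L^2(Q)$ convergence of $\sqrt{n(\vp_n)}\,\W_{,\E}(\vp_n,\E(\u_n))$, and then invokes weak lower semicontinuity of the $L^2$-norm for the stress term of the cost functional. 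You instead test the difference of the two elasticity identities \eqref{w:4} with $\bet = \u_n - \u_*$ and use the coercivity of $\C$ together with Korn's inequality \eqref{Korn} to obtain $\no{\E(\u_n)-\E(\u_*)}_{L^2(Q)} \le C \no{\vp_n - \vp_*}_{L^2(Q)} \to 0$, i.e.\ \emph{strong} convergence of the strains; the stress term of $\J$ then actually converges rather than being merely lower semicontinuous. Your route costs one extra (elementary) estimate but buys something the paper's sketch leaves implicit: the identification of the limit state requires passing to the limit in the genuinely nonlinear term $g(\W_{,\E}(\vp_n,\E(\u_n)))$ in \eqref{w:1}, for which weak convergence of $\E(\u_n)$ is not enough, so some form of your strong-strain argument is needed anyway to justify $(\vp_*,\mu_*,\si_*,\u_*) = \bS(\bm{w}_*)$. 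Your remark that the nonlinearities are only ever paired against fixed test functions, so that weak convergence of $\si_n$ suffices (covering $\beta=0$, where $\si_n$ has no time compactness), is also a correct and relevant observation that the paper does not spell out.
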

Since the proof is somewhat standard we omit the details and sketch the main points.  The non-negativity of $\J$ implies the infimum $\inf_{\U_{ad}} \J$ exists and allows us to find a minimising sequence $\{\bm{w}_n = (w_{1,n},w_{2,n}, w_{3,n})\}_{n \in \N} \subset \U_{ad}$ such that $\J(\bm{w}_n) \to \inf_{\U_{ad}} \J$ \anold{as $n\to\infty$}.  Denoting the corresponding solution as $\anold{(\vp_n, \mu_n, \si_n, \u_n)} = \bS(\bm{w}_n) \in \bY$, we infer by the bound \eqref{state:bdd} that $\{\anold{(\vp_n, \mu_n, \si_n, \u_n)}\}_{n \in \N}$ is uniformly bounded in $\bY$.  Hence, along a non-relabelled subsequence there exists a limit triplet $\bm{w} = (w_1, w_2, w_3) \in \U_{ad}$ such that\an{, as $n\to\infty$,}
\begin{align*}
(w_{1,n}, w_{2,n}, w_{3,n}) &\to (w_1, w_2, w_3) \quad \text{ weakly* in } L^\infty(\Sigma) \times L^\infty(0,T) \times L^\infty(0,T), \\
\anold{(\vp_n, \mu_n, \si_n, \u_n)} & \to (\vp, \mu, \si, \u) = \bS(\bm{w}) \quad  \text{ weakly* in } \bY.
\end{align*}
The Aubin--Lions compactness theorem then yields the strong convergence of $\vp_n$ to $\vp$ in $C^0([0,T];L^2(\Omega))$, allowing us to pass to the limit in the tracking terms of \anold{$\J$} and provides strong convergence $\sqrt{n(\vp_n)}\bet \to \sqrt{n(\vp)} \bet$ for all $\bet \in L^2(Q)$.  Together with the weak convergence of $\E(\u_n)$ to $\E(\u)$ in $L^2(Q)$, we arrive at the weak convergence
\begin{align*}
\sqrt{n(\vp_n)} \W_{,\E}(\vp_n, \E(\u_n)) \an{\to} \sqrt{n(\vp)} \W_{,\E}(\vp, \E(\u)) \quad \text{ \an{weakly} in } L^2(Q).
\end{align*}
Then, by the weak lower semicontinuity of $L^p$-norms for $p \in [1,\infty)$, we deduce that 
\begin{align*}
\J(\bm{w}) \leq \liminf_{n \to \infty} \J(\bm{w}_n) = \inf_{\U_{ad}} \J.
\end{align*}

\section{\anold{First-order} necessary optimality conditions}\label{sec:opt}
\begin{thm}
\label{thm:optcond}
Under \eqref{ass:beta}-\eqref{ass:target}, let $\bm{w}^* = (w_1^*, w_2^*, w_3^*) \in \U_{ad}$ be an optimal control with associated state $(\vp, \mu, \si, \u) = \bS(\bm{w}^*)$.  Then, there exist functions $\an{\lambda_2, \lambda_3} \in L^\infty(0,T)$ such that\an{,} for a.e.~$t \in (0,T)$,
\begin{align}\label{lambd}
\lambda_i(t) \in \begin{cases}
\{1\} & \text{ if } w_i^*(t) > 0, \\
[-1,1] & \text{ if } w_i^*(t) = 0, \\
\{-1\} & \text{ if } w_i^*(t) < 0,
\end{cases} \quad \text{ for } i \in \{2,3\},
\end{align}
and for all $\bm{y} = (y_1, y_2, y_3) \in \U_{ad}$,
\anold{\begin{equation}\label{nes:opt}
\begin{aligned}
0 & \leq \int_0^T   (\kappa r + \gamma_1 w_1^*, y_1 - w_1^*)_\Gamma \dt + \int_0^T  (\gamma_2 w_2^* + \gamma_4 \lambda_2 - \int_\Omega k(\vp) p \dx)( y_2 - w_2^* ) \dt \\
& \quad  + \int_0^T   (\gamma_3 w_3^* + \gamma_5 \lambda_3 + \int_\Omega h(\vp) r \dx )( y_3 - w_3^*  ) \dt,
\end{aligned}
\end{equation}}
where $p$ and $r$ are the first and third components of the associated adjoint variables $(p,q,r,\bs)$ satisfying the adjoint system \eqref{adj}.
\end{thm}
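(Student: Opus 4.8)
The plan is to separate $\J$ into a Fréchet-differentiable part and the two convex non-differentiable $L^1$-terms, to differentiate the smooth part through the control-to-state map $\bS$, to re-express this derivative by means of an adjoint system so that the linearised state disappears, and finally to combine the resulting variational inequality with the subdifferential of the $L^1$-norms. Concretely, I would write $\J = \J_{\mathrm{s}} + \gamma_4 \no{w_2}_{L^1(0,T)} + \gamma_5 \no{w_3}_{L^1(0,T)}$, where $\J_{\mathrm{s}}$ gathers the tracking, stress and $L^2$-regularisation contributions.

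To differentiate $\J_{\mathrm{s}}$ I first establish that $\bS$ is Fréchet differentiable at $\bm{w}^*$. Linearising \eqref{state} produces, for a direction $\bh = (h_1,h_2,h_3) \in \U$, a linear system for $(\xi,\eta,\theta,\bz)$ in which the control enters only through the terms $-h_2 k(\vp)$ in the $\vp$-equation, $h_3 h(\vp)$ in the $\si$-equation, and the Robin datum $\kappa h_1$ in the boundary condition for $\theta$. Using the $C^2 \cap W^{2,\infty}$-regularity from \eqref{ass:opt} (so that $g''$ and $\pd_\vp n$ are available and the first derivatives of $f,g,h,k$ are Lipschitz), together with a priori estimates for the linearised system analogous to those behind \eqref{state:ctsdep}, one obtains $\bS'(\bm{w}^*)\bh = (\xi,\eta,\theta,\bz)$ and hence, by the chain rule, the Fréchet derivative of $\J_{\mathrm{s}}$.

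Next I would define the adjoint variables $(p,q,r,\bs)$ as the solution of the backward-in-time system \eqref{adj}, which is the formal transpose of the linearised operator: the tracking term $\alpha_Q(\vp - \vp_Q)$ together with the $\vp$-derivative of $\tfrac{\alpha_\E}{2} n(x,\vp) |\W_{,\E}(\vp,\E(\u))|^2$ act as sources in the $p$-equation, the $\u$-derivative of the same stress term acts as a source in the $\bs$-equation, and the terminal datum is $p(T) = \alpha_\Omega(\vp(T)-\vp_\Omega)$. Testing the linearised system with $(p,q,r,\bs)$ and the adjoint system with $(\xi,\eta,\theta,\bz)$, then integrating by parts in space and time, the coupling and boundary bilinear forms cancel pairwise and every occurrence of $(\xi,\eta,\theta,\bz)$ is eliminated. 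What remains expresses $\J_{\mathrm{s}}'(\bm{w}^*)\bh$ through the three pairings $\int_0^T (\kappa r + \gamma_1 w_1^*, h_1)_\Gamma \dt$, $\int_0^T (\gamma_2 w_2^* - \int_\Omega k(\vp) p \dx) h_2 \dt$ and $\int_0^T (\gamma_3 w_3^* + \int_\Omega h(\vp) r \dx) h_3 \dt$, reproducing the bracketed coefficients of \eqref{nes:opt}.

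Finally, since $\bm{w}^*$ minimises $\J$ over the convex set $\U_{ad}$ and $\J$ is the sum of the differentiable $\J_{\mathrm{s}}$ and the convex map $\bm{w} \mapsto \gamma_4 \no{w_2}_{L^1(0,T)} + \gamma_5 \no{w_3}_{L^1(0,T)}$, the standard first-order condition for such sums yields elements $\lambda_2,\lambda_3$ of the subdifferential of $\no{\cdot}_{L^1(0,T)}$ at $w_2^*,w_3^*$ for which $\J_{\mathrm{s}}'(\bm{w}^*)(\bm{y}-\bm{w}^*) + \int_0^T (\gamma_4 \lambda_2 (y_2 - w_2^*) + \gamma_5 \lambda_3 (y_3 - w_3^*)) \dt \geq 0$ holds for all $\bm{y} \in \U_{ad}$. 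As the subdifferential of the $L^1(0,T)$-norm at $w_i^*$ consists precisely of the $\lambda_i \in L^\infty(0,T)$ with $\no{\lambda_i}_{L^\infty(0,T)} \leq 1$ obeying the pointwise sign relation \eqref{lambd}, substituting the three pairings from the previous step gives \eqref{nes:opt}. The main obstacle I anticipate is the rigorous treatment of the adjoint system \eqref{adj}: because the elasticity block is only quasistatic and $g(\W_{,\E}(\vp,\E(\u)))$ couples the $\vp$- and $\u$-equations through the bounded tensors $g'$ and $g''$, proving existence, uniqueness and sufficient regularity of $(p,q,r,\bs)$—enough to justify the integration-by-parts identity and to guarantee that $\int_\Omega k(\vp) p \dx$, $\int_\Omega h(\vp) r \dx$ and the trace $\kappa r$ on $\Gamma$ define elements of $L^\infty(0,T)$—will require careful bootstrapping, analogous to but more delicate than the state estimate \eqref{state:bdd}.
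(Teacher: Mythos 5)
Your proposal is correct and follows essentially the same route as the paper: the same splitting $\J = \J_1 + \J_2$ with the $L^1$-terms isolated, Fr\'echet differentiability of $\bS$ via the linearised system, elimination of the linearised variables by cross-testing with the adjoint system \eqref{adj}, and the sum rule for subdifferentials applied to the convex minimisation problem over $\U_{ad}$ to produce $\lambda_2,\lambda_3$ satisfying \eqref{lambd}. The technical obstacle you flag---well-posedness of the adjoint system---is precisely what the paper resolves in Theorem \ref{thm:adj} by Galerkin approximation and Gronwall backwards in time.
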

The proof of \an{Theorem \ref{thm:optcond}} proceeds in four steps, which is covered in the following four subsections.
\subsection{Linearised state system}
Given $\bm{w}^* = (w_1^*, w_2^*, w_3^*) \in \U_{ad}$ with associated \an{state} $\anold{(\vp, \mu, \si, \u)} = \bS(\bm{w}^*) \in \bY$, for arbitrary $\bh = (h_1, h_2, h_3) \in \U$, we study the following linearised state system:
\begin{subequations}\label{lin}
\begin{alignat}{3}
\xi_t & = \Lx \eta + U_{\mathrm{lin}}(\vp, \si, \E(\u), w_2^*, h_2, \xi, \psi, \E(\bv)) && \quad\text{ in } Q, \\
U_{\mathrm{lin}} & = \lambda_p g(\W_{,\E}(\vp, \E(\u))) (f'(\vp)\xi \si + f(\vp) \psi) && \\
\notag & \quad + \lambda_p \si f(\vp) g'(\W_{,\E}(\vp, \E(\u))) : \anold{\C(\E(\bv) -  \xi \E^*)} && \\
\notag & \quad  - (\lambda_a + \anold{w_2^*}) k'(\vp) \xi - h_2 k(\vp) \\
\eta & = - \Lx \xi + \Psi''(\vp) \xi - \chi \psi - \C(\E(\bv) - \xi \E^*) : \E^* && \quad\text{ in } Q, \\
\beta \psi_t & = \Lx \psi  + S_{\mathrm{lin}}(\vp, \si, w_3^*, h_3, \xi, \psi) && \quad\text{ in } Q, \\
S_{\mathrm{lin}} & = - h'(\vp) \xi (\lambda_c \si - \anold{w_3^*}) - h(\vp)(\lambda_c \psi - h_3) - B \psi && \\
\anold{\0} & = \div (\C(\E(\bv) - \xi \E^*)) && \quad\text{ in } Q, \\
0 & = \xi(0) = \psi(0) && \quad\text{ in } \Omega, \\
0 & = \pdnu \xi = \pdnu \eta, \quad \pdnu \psi + \kappa (\psi - h_1) = 0 && \quad\text{ on } \Sigma, \\
\bv & = \0 && \quad\text{ on } \Sigma_D, \\
\0 & = \C(\E(\bv) - \xi \E^*) \bn && \quad\text{ on } \Sigma_N.
\end{alignat}
\end{subequations}
Introducing the solution space
\begin{align*}
\bYl = \begin{cases}
H^1(0,T;H^1(\Omega)') \cap L^\infty(0,T;H^1(\Omega)) \cap L^2(0,T;H^2_{\bm{n}}(\Omega)) \times L^2(0,T;H^1(\Omega)) &  \\
\times H^1(0,T;H^1(\Omega)') \cap L^\infty(0,T;L^2(\Omega)) \cap L^2(0,T;H^1(\Omega)) & \\
\times L^\infty(0,T;\X) \quad \text{ if } \beta > 0, \\
\\
H^1(0,T;H^1(\Omega)') \cap L^\infty(0,T;H^1(\Omega)) \cap L^2(0,T;H^2_{\bm{n}}(\Omega)) \times L^2(0,T;H^1(\Omega)) &  \\
\times L^2(0,T;H^1(\Omega)) \times L^\infty(0,T;\X) \quad \text{ if } \beta = 0,
\end{cases}
\end{align*}
we have the following result.
\begin{thm}\label{thm:lin}
For given $\bm{w}^* = (w_1^*,w_2^*, w_3^*) \in \U_{ad}$ with $(\vp, \mu, \si, \u) = \bS(\bm{w}^*) \in \bY$ and $\bm{h} = (h_1, h_2, h_3) \in \U$, under \eqref{ass:beta}-\an{\eqref{ass:opt}}, there exists a unique solution $(\xi, \eta, \psi, \bv) \in \bYl$ to \eqref{lin} with $\xi(0) = 0$, $\psi(0) = 0$ if $\beta > 0$, and
\begin{subequations}\label{lin:weak}
\begin{alignat}{3}
0 & = \int_0^T \inn{\xi_t}{\zeta} + (\nabla \eta , \nabla \zeta) - (U_{\mathrm{lin}}(\vp, \si, \E(\u), w_2^*, h_2, \psi, \xi, \E(\bv)), \zeta) \dt, \label{lin:1} \\
0 & = \int_0^T (\eta - \Psi''(\vp) \xi + \chi \psi + \C(\E(\bv) - \xi \E^*): \E^*, \zeta) - (\nabla \xi, \nabla \zeta) \dt, \label{lin:2} \\
0 & = \int_0^T \beta \inn{\psi_t}{\zeta} + (\nabla \psi, \nabla \zeta)  + \kappa (\psi - h_1, \zeta)_\Gamma -  (S_{\mathrm{lin}}(\vp, \si, w_3^*, h_3, \xi, \psi), \zeta) \dt, \label{lin:3} \\
0 & = \int_0^T (\C(\E(\bv) - \xi \E^*), \nabla \bet) \dt, \label{lin:4}
\end{alignat}
\end{subequations}
for all $\zeta \in L^2(0,T;H^1(\Omega))$ and $\bet \in L^2(0,T;\X)$.
\end{thm}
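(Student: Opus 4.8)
The plan is to first decouple the quasi-static elasticity equation, expressing the displacement as a bounded linear function of the linearised phase field, and then to treat the remaining Cahn--Hilliard--reaction-diffusion subsystem by a Faedo--Galerkin scheme, following the well-posedness strategy of \cite{GLS1} but now exploiting that the whole system is linear in the unknowns with all coefficients frozen at the fixed state $(\vp,\mu,\si,\u)=\bS(\bm w^*)$. \textbf{Step 1 (elasticity).} For any fixed $\xi\in L^2(\Omega)$, the weak identity \eqref{lin:4} read at a fixed time is $(\C\E(\bv),\nabla\bet)=(\C(\xi\E^*),\nabla\bet)$ for all $\bet\in\X$. The bilinear form $(\C\E(\cdot),\E(\cdot))$ is bounded and, by the coercivity of $\C$ in \eqref{ass:beta} together with the Korn inequality \eqref{Korn}, coercive on $\X$; Lax--Milgram then yields a unique $\bv=\mathcal R\xi\in\X$ depending linearly and continuously on $\xi$, with $\no{\bv}_{H^1}\le C\no{\xi}_{L^2}$. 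Hence the linearised stress $\C(\E(\mathcal R\xi)-\xi\E^*)$ is a bounded linear operator of $\xi$, and substituting $\bv=\mathcal R\xi$ into \eqref{lin:1}--\eqref{lin:3} removes $\bv$ and leaves a linear parabolic subsystem for $(\xi,\eta,\psi)$.

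\textbf{Step 2 (Galerkin).} I would use the $L^2$-orthonormal eigenfunctions $\{e_j\}$ of the Neumann Laplacian, which form a basis of $H^2_{\bm n}(\Omega)$ and are orthogonal in $H^1$, to build approximations $\xi_N,\eta_N,\psi_N$. Projecting \eqref{lin:1}--\eqref{lin:3} onto $\mathrm{span}\{e_1,\dots,e_N\}$ and eliminating $\eta_N$ algebraically via \eqref{lin:2} gives a linear Carath\'eodory ODE system for the coefficients of $\xi_N$ (and of $\psi_N$ if $\beta>0$); when $\beta=0$ the nutrient $\psi_N$ is instead recovered at each time by inverting the finite-dimensional elliptic problem from \eqref{lin:3}, which is solvable because $h\ge0$ and at least one of $B,\kappa$ is positive by \eqref{ass:beta}. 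Linear ODE theory then produces a unique global-in-time Galerkin solution once the bounds below are available.

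\textbf{Step 3 (a priori estimates).} This is the main obstacle. The core is the linearised Cahn--Hilliard energy: testing the $\xi$-equation with $\eta_N$ and the $\eta$-identity with $\xi_{N,t}$ produces $\tfrac12\tfrac{d}{dt}\big(\no{\nabla\xi_N}_{L^2}^2+(\Psi''(\vp)\xi_N,\xi_N)\big)+\no{\nabla\eta_N}_{L^2}^2$ plus remainder terms, and four points need care. (i) The quadratic growth of $\Psi''$ is controlled only through the splitting $\Psi=\Psi_1+\Psi_2$ of \eqref{ass:pot}: convexity of $\Psi_1$ and boundedness of $\Psi_2''$ give $(\Psi''(\vp)\xi_N,\xi_N)\ge -C\no{\xi_N}_{L^2}^2$, so the quadratic form is coercive up to a lower-order term. (ii) Differentiating the weight yields $-\tfrac12(\Psi'''(\vp)\vp_t\xi_N,\xi_N)$; since $|\Psi'''|\le C(1+|\cdot|)$ and $\vp_t$ lives only in $L^2(0,T;H^1(\Omega)')$, one must invoke the enhanced regularity $\vp\in L^\infty(0,T;H^1)\cap L^4(0,T;H^2_{\bm n}(\Omega))$ recorded above, together with Sobolev embeddings and interpolation, to bound it. (iii) The chemotaxis and elasticity couplings ($\chi\psi_N$ and the stress term) are rewritten as time derivatives where possible and otherwise absorbed using Step 1 and Young's inequality, while the reciprocal estimate, obtained by testing the $\psi$-equation with $\psi_N$ (giving $\tfrac\beta2\tfrac{d}{dt}\no{\psi_N}_{L^2}^2+\no{\nabla\psi_N}_{L^2}^2+\kappa\no{\psi_N}_{L^2(\Gamma)}^2$), supplies control of $\psi_N$, with the sources $h_1,h_3$ treated by trace and Young inequalities. (iv) All constants must be kept independent of $\beta$ so that one argument covers both $\beta>0$ and $\beta=0$, which is the delicate part of handling the $\xi_{N,t}$-couplings uniformly. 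A Gr\"onwall argument closes the combined estimate; elliptic regularity applied to \eqref{lin:2} upgrades $\xi_N$ to $L^2(0,T;H^2)$, the $\xi$-equation gives $\xi_{N,t}\in L^2(0,T;H^1(\Omega)')$, and Step 1 yields $\bv_N\in L^\infty(0,T;\X)$.

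\textbf{Step 4 (passage to the limit and uniqueness).} The uniform bounds give, along a subsequence, weak and weak-$*$ limits in the spaces defining $\bYl$, and Aubin--Lions yields strong $L^2(Q)$-convergence of $\xi_N$. Because every equation is linear and the frozen coefficients ($f'(\vp)$, $g'(\W_{,\E}(\vp,\E(\u)))$, $h'(\vp)$, $k'(\vp)$, and the fixed $\si$) are bounded multipliers by \eqref{ass:opt} and the regularity of Theorem \ref{thm:state}, passing to the limit in \eqref{lin:1}--\eqref{lin:4} is direct, the strong convergence being needed only to identify the product $\Psi''(\vp)\xi$ in \eqref{lin:2}, where the multiplier grows quadratically. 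The initial conditions $\xi(0)=\psi(0)=0$ follow from the convergence of the Galerkin data. Finally, uniqueness is immediate from linearity: the difference of two solutions solves the homogeneous problem with $\bh=\0$ and zero initial data, and the very same energy and Gr\"onwall estimates of Step 3 force it to vanish.
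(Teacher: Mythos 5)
Your overall architecture (decoupling the elasticity via Lax--Milgram, Galerkin approximation, energy estimates, uniqueness by linearity) is sound and close in spirit to the paper's, but your Step 3(ii) contains a genuine gap that the paper's proof is specifically built to avoid. By placing $(\Psi''(\vp)\xi_N,\xi_N)$ under the time derivative you generate the commutator term $-\tfrac12(\Psi'''(\vp)\vp_t\,\xi_N,\xi_N)$, and this term cannot be controlled under the stated hypotheses: $\vp_t$ lives only in $L^2(0,T;H^1(\Omega)')$, so the expression must be read as the duality pairing $\langle \vp_t,\Psi'''(\vp)\xi_N^2\rangle$, which requires $\Psi'''(\vp)\xi_N^2\in H^1(\Omega)$ with a norm controlled by the energy. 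That fails on two counts: $\nabla(\xi_N^2)=2\xi_N\nabla\xi_N$ lies only in $L^{3/2}(\Omega)$ when $\xi_N$ is controlled merely in $H^1(\Omega)$ (the only uniform control the estimate provides before Gronwall closes), and differentiating the weight $\Psi'''(\vp)$ would require a fourth derivative of $\Psi$, which \eqref{ass:pot} does not provide ($\Psi_i\in C^3$ only). The enhanced regularity $\vp\in L^\infty(0,T;H^1)\cap L^4(0,T;H^2_{\bm n}(\Omega))$ you invoke concerns $\vp$, not $\vp_t$, and no interpolation rescues the pairing.

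The paper closes the estimate differently and you should adopt that route: it does \emph{not} integrate the $\Psi''$-term by parts in time, but keeps $(\Psi''(\vp)\xi,\xi_t)$ as a remainder, bounds it by $\no{\xi_t}_{H^1(\Omega)'}\no{\Psi''(\vp)\xi}_{H^1}$, reads $\no{\xi_t}_{H^1(\Omega)'}\le\no{\nabla\eta}_{L^2}+\no{U_{\mathrm{lin}}}_{L^2}$ directly from \eqref{lin:1}, and uses $\no{\Psi''(\vp)\xi}_{H^1}\le C(1+\no{\vp}_{H^2}^2)\no{\xi}_{H^1}$ together with $\vp\in L^4(0,T;H^2)$ so that Gronwall applies; the coercive part of the energy is simply $\no{\nabla\xi}_{L^2}^2+K\no{\xi}_{L^2}^2$ (obtained by additionally testing \eqref{lin:1} with $K\xi$ and \eqref{lin:2} with $\eta$ to recover $\no{\eta}_{H^1}^2$ on the left), with no need to exploit the sign of $(\Psi''(\vp)\xi,\xi)$. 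Your Lax--Milgram decoupling of $\bv$ is a legitimate variation on the paper's device of testing \eqref{lin:4} with $\bv_t$ to form $\frac{d}{dt}\int_\Omega\W^{\mathrm{lin}}(\xi,\E(\bv))\dx$, but note that you must still produce that exact time derivative to cancel the stress--$\xi_t$ coupling: $\C(\E(\bv)-\xi\E^*):\E^*$ is only in $L^2(\Omega)$ (no $H^2$ elliptic regularity for $\bv$ with mixed boundary conditions), so this coupling cannot be absorbed through the $H^1(\Omega)'$--$H^1(\Omega)$ duality either.
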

\begin{proof}
We proceed with formal estimates that can be justified rigorously with a Galerkin approximation.  In the following the positive constants denoted by the symbol $C$ will be independent of the Galerkin parameter, as well as $h_1$, $h_2$ and $h_3$ \anold{and might change from line to line}. 
\anold{Besides, let us remark that since $\bm{w}^*$ is fixed, the corresponding state $(\vp, \mu, \si, \u)\an{={\cal S}(\bm{w}^*)}$ enjoys the bound \eqref{state:bdd}.} Let us mention that uniqueness follows from existence thanks to the linearity of the system \eqref{lin}. 

We test \eqref{lin:1} with $\eta$ and $K \xi$, \eqref{lin:2} with $-\xi_t$ and $\eta$, \eqref{lin:3} with $R \psi$ and \eqref{lin:4} with $\bv_t$ for positive constants $K, R$ to be determined later.  After summing and rearranging we get
\begin{equation}\label{lineq}
\begin{aligned}
& \frac{1}{2} \frac{d}{dt} \Big ( K \| \xi \|_{L^2}^2 + \| \nabla \xi \|_{L^2}^2 + R \beta \| \psi \|_{L^2}^2 \Big ) + \frac{d}{dt} \int_\Omega \W^{\mathrm{lin}}(\xi, \E(\bv)) \dx \\
& \qquad \an{+ \| \eta \|_{H^1}^2 }
%+ \| \nabla \eta \|_{L^2}^2 
%+ R(B + \lambda_c h(\vp)) \| \psi \|_{L^2}^2 
+ R \| \nabla \psi \|_{L^2}^2 + R \kappa \| \psi \|_{L^2_\Gamma}^2 + RB \| \psi \|_{L^2}^2 \\
& \quad = - K (\nabla \eta, \nabla \xi) + (U_{\mathrm{lin}}, \eta + K \xi) + (\Psi''(\vp) \xi, \eta - \xi_t) + \chi (\psi, \xi_t - \eta) \\
& \qquad + (\nabla \xi, \nabla \eta) - (\C(\E(\bv) - \xi \E^*) : \E^*, \eta) + R \kappa (h_1, \psi)_{\Gamma} + R(S_{\mathrm{lin}}, \psi) 
\end{aligned}
\end{equation}
where
\begin{align*}
\W^{\mathrm{lin}}(\xi, \E(\bv)) = \frac{1}{2}(\E(\bv) - \xi \E^*) : \C (\E(\bv) - \xi \E^*),
\end{align*}
and we have used the identity
\begin{align*}
\frac{d}{dt} \int_\Omega \W^{\mathrm{lin}}(\xi, \E(\bv)) \dx & = (\W^{\mathrm{lin}}_{,\xi}(\xi, \E(\bv)), \xi_t) + (\W^{\mathrm{lin}}_{,\E}(\xi, \E(\bv)), \E(\bv_t)) \\
& = -(\C(\E(\bv) - \xi \E^*): \E^*, \xi_t) + (\C(\E(\bv) - \xi \E^*), \E(\bv_t))
\end{align*}
derived with the help of the symmetry of $\C$ and $\E^*$.  Furthermore, by the positive definiteness of $\C$ and Young's inequality,
\begin{align}\label{wlin}
\W^{\mathrm{lin}}(\xi, \E(\bv)) \geq \frac{c_0}{4} \anold{\| \E(\bv) \|^2_{L^2}} - C \big ( 1 + \| \xi \|_{L^2}^2 \big ).
\end{align}
Next, \anold{recalling that $w_2^*$ and $w_3^*$ are constant in space,} from the definition of $U_{\mathrm{lin}}$ and $S_{\mathrm{lin}}$, using the boundedness of $w_2^*$, $w_3^*$, $f$, $g$, $h$, $k$ and their derivatives, as well as the boundedness of $\si$, and of $|g'(\W_{,\E}(\vp, \E(\u)))|$, it is easy to see that
\begin{align}
	\label{Ulin}
\| U_{\mathrm{lin}} \|_{L^2} & \leq C \big ( \| \xi \|_{L^2} + \| \psi \|_{L^2} + \| \E(\bv) \|_{L^2} + \anold{|h_2|} \big ),
\end{align}
while
\begin{align*}
R( S_{\mathrm{lin}}, \psi) & \leq - R\big ( B \| \psi \|_{L^2}^2 + \lambda_c (h(\vp) \psi, \psi) \big ) + R C \big ( \| \xi \|_{L^2} + \anold{|h_3|} \big ) \| \psi \|_{L^2} \\
& \leq C \big ( \anold{\| \xi \|_{L^2}^2 }+ \anold{|h_3|}^2 \big ) + \| \psi \|_{L^2}^2\anold{,}
\end{align*}
\anold{where we also use that the first two terms on the right-hand side are non-positive.}
Also, using \eqref{ass:pot}\an{, the continuous inclusion $V \subset L^6(\Omega)$,} and the fact that $\vp \in L^\infty(0,T;H^1(\Omega))$, 
\begin{align*}
\| \Psi''(\vp) \xi \|_{L^2}^2 \leq C \| \xi \|_{L^6}^2 \big ( 1 + \| \vp \|_{L^6}^4 \big ) \leq C \| \xi \|_{H^1}^2.
\end{align*}
Hence, the right-hand side of \eqref{lineq} can be estimated as follows:
\begin{align*}
\mathrm{RHS} & \leq \frac{1}{4} \| \eta \|_{H^1}^2 + c \| \psi \|_{L^2}^2 + C \big ( \| \xi \|_{H^1}^2 + \| \E(\bv) \|_{L^2}^2 + \| h_1 \|_{L^2_\Gamma}^2 + \anold{|h_2|}^2 + \anold{|h_3|}^2  \big ) \\
& \quad  + (\chi \psi - \Psi''(\vp) \xi, \xi_t) + \frac{R\kappa}{2} \| \psi \|_{L^2_\Gamma}^2 - RB \| \psi \|_{L^2}^2
\end{align*}
with a positive constant $c$ that is independent of $R$.  To handle the term involving $\xi_t$, we use \eqref{lin:1} \an{and \eqref{Ulin}} to deduce that 
\begin{equation}\label{lin:xit}
\begin{aligned}
\| \xi_t \|_{H^1(\Omega)'} & \leq \| \nabla \eta \|_{L^2} + \| U_{\mathrm{lin}} \|_{L^2} \\
& \leq \| \nabla \eta \|_{L^2} + C \big ( \| \xi \|_{L^2} + \| \psi \|_{L^2} + \| \E(\bv) \|_{L^2} + \anold{|h_2|} \big ),
\end{aligned}
\end{equation}
while invoking the assumption \eqref{ass:pot} for $\Psi''$ and $\Psi'''$ leads to
\begin{align*}
\| \Psi''(\vp) \xi \|_{H^1} & \leq \| \Psi''(\vp) \xi \|_{L^2} + \| \xi \Psi'''(\vp) \nabla \vp \|_{L^2} + \| \Psi''(\vp) \nabla \xi \|_{L^2} \\
& \leq C \| \xi \|_{H^1} + \| \Psi'''(\vp)\|_{L^6} \| \xi \|_{L^6} \| \nabla \vp \|_{L^6} + \| \Psi''(\vp) \|_{L^\infty} \| \nabla \xi \|_{L^2} \\
& \leq C \big ( 1 + \| \vp \|_{H^2}^2 \big ) \| \xi \|_{H^1}.
\end{align*}
Then, via Young's inequality
\begin{align*}
|(\Psi''(\vp) \xi, \xi_t)| \leq \frac{1}{8} \| \nabla \eta \|_{L^2}^2 + C \big ( 1 + \| \vp \|_{H^2}^4 \big ) \| \xi \|_{H^1}^2 + C \big ( \| \psi \|_{L^2}^2 + \| \E(\bv) \|_{L^2}^2 + \anold{|h_2|}^2 \big ).
\end{align*}
Meanwhile, by a similar argument,
\begin{align*}
|(\chi \psi, \xi_t)| \leq \frac{1}{8} \| \nabla \eta \|_{L^2}^2 + C \big ( \| \psi \|_{H^1}^2 + \| \E(\bv) \|_{L^2}^2 + \anold{|h_2|}^2 + \| \xi \|_{L^2}^2 \big ),
\end{align*}
and so collecting the above estimates for the right-hand side of \eqref{lineq} we deduce the existence of two positive constants \anold{$c_1$ and $c_2$} independent of $R$ such that
\begin{align*}
& \frac{1}{2} \frac{d}{dt} \Big ( K \| \xi \|_{L^2}^2 + \| \nabla \xi \|_{L^2}^2 + R \beta \| \psi \|_{L^2}^2 \Big ) + \frac{d}{dt} \int_\Omega \W^{\mathrm{lin}}(\xi, \E(\bv)) \dx \\
& \qquad + \frac{1}{2} \| \eta \|_{H^1}^2 + (R - \anold{c_1}) \| \nabla \psi \|_{L^2}^2 + \frac{R\kappa}{2} \| \psi \|_{L^2_\Gamma}^2 \anold{+ (RB- c_2 ) \| \psi \|_{L^2}^2} \\
& \quad \leq C \big ( 1 + \| \vp \|_{H^2}^4 \big ) \big ( \| \E(\bv) \|_{L^2} + \|\xi \|_{H^1}^2 \big ) + C \big ( \| h_1 \|_{L^2_\Gamma}^2 + \anold{|h_2|}^2 + \anold{|h_3|}^2 \big ) \\
& \quad \leq C \Big ( 1 + \| \vp \|_{H^2}^4 \big ) \Big ( \| \W^{\mathrm{lin}}(\xi, \E(\bv)) \|_{L^1} + \| \xi \|_{H^1}^2 \big ) + C \big ( \| h_1 \|_{L^2_\Gamma}^2 + \anold{|h_2|}^2 + \anold{|h_3|}^2 \big ),
\end{align*}
where we have also used the lower bound \eqref{wlin}. 
\anold{In the case $\beta >0$ we can directly employ Gronwall's inequality to handle the terms on the right-hand side, whereas in the case $\beta = 0$ we proceed as follows: if}
$B > 0$, we can choose $R> \max(2 \anold{c_1}, \frac{2\anold{c_2}}{B})$, and if $B = 0$ then $\kappa > 0$ by \eqref{ass:beta} and we employ the generalised Poincar\'e inequality
\begin{align*}
\| f \|_{L^2} \leq C \big ( \| \nabla f \|_{L^2} + \| f \|_{L^2_\Gamma} \big ) \quad \forall f \in H^1(\Omega)
\end{align*}
to handle the term $\anold{c_2} \| \psi \|_{L^2}^2$ on the left-hand side after choosing $R$ sufficiently large.  Invoking Gronwall's inequality, keeping in mind that $\vp \in L^4(0,T;H^2(\Omega))$, there exists a constant $C$ independent of $\beta$ such that 
\begin{equation}\label{lineq1}
\begin{aligned}
& \| \xi \|_{L^\infty(0,T;H^1)}^2 + \beta \| \psi \|_{L^\infty(0,T;L^2)}^2 + \| \W^{\mathrm{lin}}(\xi, \E(\bv)) \|_{L^\infty(0,T;L^1)}^2 \\
& \qquad + \| \eta \|_{L^2(0,T;H^1)}^2 + \| \nabla \psi \|_{L^2(Q)}^2 + \kappa \| \psi \|_{L^2(\Sigma)}^2 + B \| \psi \|_{L^2(Q)}^2 \\
& \quad \leq C \big ( \| h_1 \|_{L^2(\Sigma)}^2 +\anold{ \| h_2 \|_{L^2(0,T)}^2 + \| h_3 \|_{L^2(0,T)}^2} \big ).
\end{aligned}
\end{equation}
In view of $\xi(0) = 0$, we note that from \eqref{lin} the initial data $\bv_0$ assigned to $\bv$ satisfies the elliptic equation
\begin{align*}
\begin{cases}
\div (\C(\E(\bv_0))) = \an{\0} & \text{ in } \Omega, \\
\bv_0 = \0 & \text{ on } \GD, \\
\C(\E(\bv_0))\bm{n} = \0 & \text{ on } \GN\anold{.}
\end{cases}
\end{align*}
Testing with $\bv_0$ and using Korn's inequality shows that 
\begin{align*}
\| \bv_0 \|_{H^1} \leq \anold{C_K} \| \E(\bv_0) \|_{L^2} \anold{\leq} \anold{\frac {C_K}{c_0}} (\C \E(\bv_0), \E(\bv_0)) = 0,
\end{align*}
which explains the absence of initial data on the right-hand side of \eqref{lineq1}.  Then, recalling the lower bound \eqref{wlin} and employing Korn's inequality we have
\begin{align*}
& \| \xi \|_{L^\infty(0,T;H^1)}^2 + \beta \| \psi \|_{L^\infty(0,T;L^2)}^2 + \| \bv \|_{L^\infty(0,T;H^1)}^2  + \| \eta \|_{L^2(0,T;H^1)}^2 + \| \psi \|_{L^2(0,T;H^1)}^2  \\
& \quad \leq C \big ( \| h_1 \|_{L^2(\Sigma)}^2 + \| h_2 \|_{L^2\anold{(0,T)}}^2 + \| h_3 \|_{L^2\anold{(0,T)}}^2 \big ),
\end{align*}
which also implies the uniqueness of solution since the difference of two solutions to the linear system \eqref{lin} satisfies \eqref{lin} with $h_1 = h_2 = h_3 = 0$.  To complete the proof we return to \eqref{lin:xit} to deduce that 
\begin{align*}
\| \xi_t \|_{L^2(0,T;H^1(\Omega)')} \anold{\leq C \big ( \| h_1 \|_{L^2(\Sigma)} + \| h_2 \|_{L^2\anold{(0,T)}} + \| h_3 \|_{L^2\anold{(0,T)}} \big ),}
\end{align*}
while if $\beta > 0$, from \eqref{lin:3} we \anold{also} have
\begin{align*}
\| \psi_t \|_{L^2(0,T;H^1(\Omega)')} \anold{\leq C \big ( \| h_1 \|_{L^2(\Sigma)} + \| h_2 \|_{L^2\anold{(0,T)}} + \| h_3 \|_{L^2\anold{(0,T)}} \big ).}
\end{align*}
Lastly after passing to the limit in the Galerkin approximation \anold{we obtain} limit functions $(\xi, \eta, \psi, \bv) \in \bYl$ satisfying \eqref{lin} \anold{except for the $L^2(0,T;H^2_{\bm{n}}(\Omega))$ regularity of $\xi$}.  This can be obtained from viewing \eqref{lin:2} as the variational formulation of the elliptic problem
\begin{align*}
\begin{cases}
- \Lx \xi = \tilde f \an{:}= \eta - \Psi''(\vp) \xi + \chi \psi + \C(\E(\bv) - \xi \E^*): \E^* & \text{ in } \anold{Q}, \\
\pdnu \xi = 0 & \text{ on } \anold{\Sigma},
% \\
%\anold{\xi(0) = 0} & {\text{ in } \anold{\Omega,}}
\end{cases}
\end{align*}
with a right-hand side $\tilde f \in L^2(Q)$, and with the help of elliptic regularity we then infer that $\xi \in L^2(0,T;H^2_{\bm{n}}(\Omega))$.  Hence, \anold{we have shown that $(\xi, \eta, \psi, \bv) \in \bYl$ and this concludes the proof.}
\end{proof}
\subsection{Differentiability of the solution operator}
In this section we establish the Fr\'echet differentiability of the solution operator $\bS$ \an{between suitable Banach spaces,} and \anold{that} the derivative at $\bm{w}^* = (w_1^*, w_2^*, w_3^*) \in \U_{ad}$ in direction $\bm{h} = (h_1, h_2, h_3) \in \U$ is the unique solution $(\xi, \eta, \psi, \bv)$ \an{obtained} from Theorem \ref{thm:lin}.  This is formulated as follows.

\begin{thm}\label{thm:Fdiff}
Under \eqref{ass:beta}-\eqref{ass:opt}, for given $\bm{w}^* \in \U_{ad}$ with $(\vp, \mu, \si, \u) \anold{= \bS(\bm{w}^*)}\in \bY$, the control-to-state operator $\bS$ is Fr\'echet differentiable at $\bm{w}^*$ when viewed as a mapping from $\U$ to $\bX$, where
\begin{align*}
\bX = \begin{cases}
L^\infty(0,T;L^2(\Omega)) \cap L^2(0,T;H^2_{\bm{n}}(\Omega)) \times L^2(Q) & \\
 \times L^\infty(0,T;L^2(\Omega)) \cap L^2(0,T;H^1(\Omega)) \times L^2(0,T;\X) & \text{ if } \beta > 0, \\
& \\
L^\infty(0,T;L^2(\Omega))\cap L^2(0,T;H^2_{\bm{n}}(\Omega))   \times L^2(Q) & \\
\times L^2(0,T;H^1(\Omega)) \times L^2(0,T;\X) & \text{ if } \beta = 0\an{.}
\end{cases}
\end{align*}
Moreover, for all $\bm{h} = (h_1, h_2, h_3) \in \U$, the directional derivative
\begin{align*}
D\bS(\bm{w}^*)[\bm{h}] = (\xi, \eta, \psi, \bv)
\end{align*}
is the unique solution to \eqref{lin} associated to $\bm{h}$.
\end{thm}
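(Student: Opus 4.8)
The plan is to use the standard remainder technique, leaning on the continuous dependence estimate \eqref{state:ctsdep} and the linearised well-posedness of Theorem \ref{thm:lin}. Fix $\bm{w}^* \in \U_{ad}$ with state $(\vp, \mu, \si, \u) = \bS(\bm{w}^*)$, and for $\bm{h} = (h_1, h_2, h_3) \in \U$ small enough that $\bm{w}^* + \bm{h} \in \U_R$ write $(\vp^h, \mu^h, \si^h, \u^h) = \bS(\bm{w}^* + \bm{h})$ and set the differences $y := \vp^h - \vp$, $\vartheta := \mu^h - \mu$, $z := \si^h - \si$ and $\bm{a} := \u^h - \u$. By \eqref{state:ctsdep} these satisfy $\| (y, \vartheta, z, \bm{a}) \|_{\bX} \le C \|\bm{h}\|_\U$; in addition $z$ is bounded in $L^\infty(Q)$ by Theorem \ref{thm:state}, and since the elasticity system is linear the stress difference obeys $\div(\C(\E(\bm{a}) - y \E^*)) = \0$, so elliptic regularity upgrades $\E(\bm{a})$ to $L^\infty(0,T;L^p)$ with the same exponent $p > 2$ as in Theorem \ref{thm:state} and with norm proportional to $\|\bm{h}\|_\U$. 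Denoting by $(\xi, \eta, \psi, \bv)$ the unique solution of \eqref{lin} from Theorem \ref{thm:lin} --- the candidate derivative --- I introduce the remainders $\rho := y - \xi$, $\tau := \vartheta - \eta$, $\omega := z - \psi$, $\bm{b} := \bm{a} - \bv$ and aim to prove $\|(\rho, \tau, \omega, \bm{b})\|_{\bX} = o(\|\bm{h}\|_\U)$ as $\|\bm{h}\|_\U \to 0$.

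Subtracting the weak form \eqref{lin:weak} from the difference of the two state formulations \eqref{w:1}--\eqref{w:4} yields a system for $(\rho, \tau, \omega, \bm{b})$ of the same structure as \eqref{lin}, but forced by the Taylor remainders of the nonlinear terms $U$, $\Psi'$ and $S$ (the stress contributions $\W_{,\vp}$ and $\W_{,\E}$ being affine in $(\vp, \E(\u))$, hence reproduced exactly with no remainder). For each such $F$ I use the first-order expansion
\[ F(\vp^h, \si^h, \E(\u^h)) - F(\vp, \si, \E(\u)) - DF[\xi, \psi, \E(\bv)] = \mathcal{R}_F + DF[\rho, \omega, \E(\bm{b})], \]
with $DF = DF(\vp, \si, \E(\u))$; the linear-in-remainder piece $DF[\rho, \omega, \E(\bm{b})]$ is handled exactly as the corresponding term in Theorem \ref{thm:lin} and absorbed on the left, while $\mathcal{R}_F$ collects the genuinely quadratic contributions. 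Using $f, g, h, k \in C^2 \cap W^{2,\infty}$ from \eqref{ass:opt}, the bounds on $\Psi''$, $\Psi'''$ in \eqref{ass:pot}, the boundedness of $g'$, $g''$, and the $L^\infty$-bound on $\si$, each $\mathcal{R}_F$ is pointwise dominated, up to bounded state-dependent weights, by $|y|^2 + |\E(\bm{a})|^2 + |y|\,|\E(\bm{a})| + |y|\,|z| + |z|\,|\E(\bm{a})|$ together with cross terms $(|h_2| + |h_3|)\,(|y| + |z| + |\E(\bm{a})|)$. I stress that because $U$ and $S$ are linear in $\si$, the nutrient difference $z$ never appears squared, only in products with $y$ or $\E(\bm{a})$ --- this is what makes the weak control of $z$ sufficient.

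I would then run the energy estimate of Theorem \ref{thm:lin} on the remainder system verbatim --- testing the $\rho$-equation with $\tau$ and $K\rho$, the $\tau$-equation with $-\rho_t$ and $\tau$, the $\omega$-equation with $R\omega$ and the $\bm{b}$-equation with $\bm{b}_t$, then summing and applying Young's, Korn's and Gronwall's inequalities and $\vp \in L^4(0,T;H^2_{\bm{n}}(\Omega))$ --- the only new input being the bound on $\mathcal{R}_F$ in $L^2(Q)$. Here \eqref{state:ctsdep} and $L^\infty(0,T;H^1) \hookrightarrow L^4(Q)$ give $\| |y|^2 \|_{L^2(Q)} \le C \|\bm{h}\|_\U^2$; interpolating $\|z\|_{L^\infty(Q)} \le C$ with $\|z\|_{L^2(Q)} \le C\|\bm{h}\|_\U$ gives $\|z\|_{L^4(Q)} \le C\|\bm{h}\|_\U^{1/2}$, so the $z$-products are at worst $C\|\bm{h}\|_\U^{3/2}$; and the $L^\infty(0,T;L^p)$ control of $\E(\bm{a})$, interpolated against its decay in $L^2(Q)$, handles the $\E(\bm{a})$-terms with a strictly superlinear bound, the control cross terms being likewise $O(\|\bm{h}\|_\U^2)$. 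Since the remainder has zero initial data and $\bm{b}$ is recovered from its elliptic equation via Korn's inequality, this gives $\|(\rho, \tau, \omega, \bm{b})\|_{\bX} = o(\|\bm{h}\|_\U)$. Finally $\bm{h} \mapsto (\xi, \eta, \psi, \bv)$ is linear and, by \eqref{lineq1}, a bounded operator from $\U$ into $\bX$, so these two facts together establish Fr\'echet differentiability and the identification $D\bS(\bm{w}^*)[\bm{h}] = (\xi, \eta, \psi, \bv)$.

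The main obstacle is the mechanical coupling in the growth term, the composite $\lambda_p \si f(\vp) g(\W_{,\E}(\vp, \E(\u)))$: although $\W_{,\E}$ is affine, $g$ is a genuine nonlinearity of the stress, and its Taylor remainder feeds a quadratic contribution $|\E(\bm{a})|^2$ (and the mixed $|y|\,|\E(\bm{a})|$, $|z|\,|\E(\bm{a})|$) into the source. Controlling these in $L^2(Q)$ with a superlinear bound in $\|\bm{h}\|_\U$ is exactly what forces the use of the improved integrability $\E(\bm{a}) \in L^\infty(0,T;L^p)$, $p>2$ --- obtained for free from the linearity of the elasticity system and elliptic regularity --- and it is also the reason differentiability can only be proved into the weaker space $\bX$, with the chemical potential component dropping to $L^2(Q)$, rather than into $\bY$. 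The remaining expansions for $\Psi'$ (nonlinear only in $\vp$) and $S$ (bilinear in $(\vp, \si)$) are comparatively routine.
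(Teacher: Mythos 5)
Your overall architecture --- introducing the remainders $\vp_h-\vp-\xi$, etc., deriving a linear system for them forced by second-order Taylor remainders, and closing with an energy estimate plus the continuous dependence bound \eqref{state:ctsdep} --- matches the paper's. The gap is in your treatment of the quadratic stress remainder. The second-order expansion of $g$ produces the term $|\C(\E(\u_h-\u)-(\vp_h-\vp)\E^*)|^2$ in the source of the phase equation, and you propose to bound it in $L^2(Q)$ by upgrading $\E(\u_h-\u)$ to $L^\infty(0,T;L^p)$ and interpolating against its $L^2$ decay. But the exponent $p>2$ furnished by elliptic regularity for the mixed Dirichlet--Neumann problem is only \emph{some} exponent larger than $2$, possibly arbitrarily close to $2$ (Theorem \ref{thm:state} asserts no more); your route would need $p\ge 4$ to land the square in $L^2(\Omega)$, or at least $p\ge 12/5$ to pair it with a test function in $H^1(\Omega)\hookrightarrow L^6(\Omega)$, and neither is guaranteed. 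So the step ``the $L^\infty(0,T;L^p)$ control of $\E(\bm a)$ \dots handles the $\E(\bm a)$-terms with a strictly superlinear bound'' does not close. The paper circumvents this entirely: it pairs the whole nonlinear remainder $X_h$ with $\Phi$ through the duality $L^1(\Omega)\times L^\infty(\Omega)$, estimating $(\lambda_p X_h,\Phi)\le\eps\|\Phi\|_{H^2}^2+C\|X_h\|_{L^1}^2$ via $H^2(\Omega)\hookrightarrow L^\infty(\Omega)$, so that only $\|\E(\u_h)-\E(\u)\|_{L^2}^2\le C\|\bm h\|_{\U}^2$ pointwise in time is needed; the price, the term $\eps\|\Phi\|_{H^2}^2$, is absorbed by adding a small multiple of the auxiliary elliptic estimate \eqref{F:H2} for $\|\Phi\|_{H^2}$ to the energy inequality. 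This duality-plus-absorption device is the essential idea your proposal is missing.

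A secondary issue: you propose to rerun the testing scheme of Theorem \ref{thm:lin} verbatim, in particular testing the chemical-potential equation with $-\rho_t$ and the elasticity equation with $\bm b_t$. For the remainder system this forces you to pair the Taylor remainder of $\Psi'$, namely $R_\Psi(\vp_h-\vp)^2$, with $\rho_t\in (H^1(\Omega))'$, hence to control it in $H^1(\Omega)$; its gradient involves a fourth derivative of $\Psi$, which \eqref{ass:pot} does not provide. The paper instead uses the lighter scheme (test with $\Phi$, $\lambda$, $M\theta$, $K\bz$), which is exactly calibrated to the weaker target space $\bX$ --- note its first component only requires $L^\infty(0,T;L^2(\Omega))$, not $L^\infty(0,T;H^1(\Omega))$ --- and yields the stronger conclusion $\|(\Phi,\lambda,\theta,\bz)\|_{\bX}\le C\|\bm h\|_{\U}^2$ rather than your $o(\|\bm h\|_{\U})$.
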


\begin{proof}
We denote
\begin{align*}
(\vp_h, \mu_h, \si_h, \u_h) = \bS(\bm{w}^* + \bm{h}),
\end{align*}
and aim to show 
\begin{align*}
\frac{\|\bS(\bm{w}^* + \bm{h}) - \bS(\bm{w}^*) - D\bS(\bm{w}^*)[\bm{h}]\|_{\bX}}{\| \bm{h} \|_{\U}} \to 0 
\qquad \an{\text{as $\| \bm{h} \|_{\U} \to 0$.}}
\end{align*}
%as $\| \bm{h} \|_{\U}^2 \to 0$.  
This is done via establishing for functions
\begin{align*}
\Phi := \vp_h - \vp - \xi, \quad \lambda := \mu_h - \mu - \eta, \quad \theta := \si_h - \si - \psi, \quad \bz := \u_h - \u - \bv
\end{align*}
the inequality
\begin{align}
\label{toprove}
\| (\Phi, \lambda, \theta, \bz) \|_{\bX} \leq C \| \bh \|_{\U}^2
\end{align}
with a positive constant $C$ independent of $(\Phi, \lambda, \theta, \bz)$ and $\bh$.  To this end, we recall from Theorem \ref{thm:state} and \anold{Theorem} \ref{thm:lin} that the new variables $(\Phi, \lambda, \theta, \bz) \in \bYl$ satisfy
\begin{subequations}\label{Frech:sys}
\begin{alignat}{3}
\label{F:1} 0 & = \inn{\Phi_t}{\zeta} + (\nabla \lambda, \nabla \zeta) +(\lambda_p X_h, \zeta) \\
\notag & \quad - ((\lambda_a + w_2^*) [k(\vp_h) - k(\vp) - k'(\vp)\xi], \zeta) - ((k(\vp_h) - k(\vp)) h_2, \zeta) \\
\label{F:2} 0 & = (\lambda, \zeta) - (\nabla \Phi, \nabla \zeta) - (\Psi'(\vp_h) - \Psi'(\vp) - \Psi''(\vp) \xi, \zeta) \\
\notag & \quad + (\chi \theta, \zeta) - (\C(\E(\bz) - \Phi \E^*): \E^*, \zeta), \\
\label{F:3} 0 & = \beta \inn{\theta_t}{\zeta} + (\nabla \theta, \nabla \zeta) + (B\theta + \lambda_c h(\vp) \theta, \zeta) + (\kappa \theta, \zeta)_{\Gamma}  \\
\notag & \quad + \lambda_c  ((\si - w_3^*) [h(\vp_h) - h(\vp) - h'(\vp)\xi], \zeta) -  \lambda_c ((h(\vp_h) - h(\vp)) h_3, \zeta), \\
\notag & \quad + \lambda_c ((h(\vp_h) - h(\vp))(\si_h - \si), \zeta)  \\
\label{F:4} 0 & = (\C(\E(\bz) - \Phi \E^*), \nabla \bet), 
\end{alignat}
\end{subequations}
for all $\zeta \in H^1(\Omega)$ and $\anold{\bet \in \X}$ and for a.e.~$t \in (0,T)$, where
\begin{align*}
X_h &= \anold{\big(}g(\W_{,\E}(\vp_h, \E(\u_h))) - g(\W_{,\E}(\vp, \E(\u)))\anold{\big{)}}  \\
& \qquad \times \Big [ \anold{(f(\vp_h) - f(\vp))(\si_h - \si) +  f(\vp)(\si_h - \si)} + (f(\vp_h) - f(\vp)) \si \Big ] \\
& \quad + g(\W_{,\E}(\vp, \E(\u))) \\
& \qquad \times \Big [ (f(\vp_h) - f(\vp))(\si_h - \si) + \si \anold{[}f(\vp_h) - f(\vp) - f'(\vp)\xi\anold{]} + f(\vp)\anold{\theta}\Big ] \\
& \quad + f(\vp) \si \Big [ g(\W_{,\E}(\vp_h, \E(\u_h))) - g(\W_{,\E}(\vp, \E(\u)))  - g'(\W_{,\E}(\vp, \E(\u))) : \anold{\C(\E(\bv)-\xi \E^* )}\Big ].
\end{align*}
\anold{Take note that $h_1$ does not appear in \eqref{Frech:sys} since the state system \eqref{state} is linear in $w_1^* = \sigma_B$.} We invoke Taylor's theorem with integral remainder for $f \in W^{2,\infty}(\R)$:
\begin{align*}
f(x) - f(a) - f'(a)(x-a) = (x-a)^2 \int_0^1 f''(a + z(x-a))(1-z) \dz \quad \text{ for } a, x \in \R,
\end{align*}
to deduce that 
\begin{align*}
f(\vp_h) - f(\vp) - f'(\vp) \xi & = f'(\vp) \Phi + (\vp_h - \vp)^2 R_f, \\
h(\vp_h) - h(\vp) - h'(\vp) \xi & = h'(\vp) \Phi + (\vp_h - \vp)^2 R_h, \\
k(\vp_h) - k(\vp) - k'(\vp) \xi & = k'(\vp) \Phi + (\vp_h - \vp)^2 R_k, \\ 
\Psi'(\vp_h) - \Psi'(\vp) - \Psi''(\vp) \xi & = \Psi''(\vp) \Phi + (\vp_h - \vp)^2 \an{\an{R_{\Psi}}},
\end{align*}
where
\begin{align*}
R_f & = \int_0^1 f''(\vp + z(\vp_h - \vp)) (1-z) \dz, \quad R_h = \int_0^1 h''(\vp + z(\vp_h - \vp)) (1-z) \dz, \\
R_k & = \int_0^1 k''(\vp + z(\vp_h - \vp)) (1-z) \dz, \quad \an{R_{\Psi}} = \int_0^1 \Psi'''(\vp + z(\vp_h - \vp))(1-z) \dz,
\end{align*}
and in light of the \an{regularity} assumption \an{\eqref{ass:opt}}, as well as \eqref{ass:pot} and \eqref{state:bdd}, there exists a positive constant $C$ such that 
\begin{align}\label{remain}
\|R_f\|_{L^\infty} \an{+ \|R_h \|_{L^{\infty}} + \|R_k \|_{L^\infty} }\leq C, \quad \|\an{R_{\Psi}}\|_{L^6} \leq C\big ( 1 + \|\vp \|_{L^6} + \|\vp_h \|_{L^6} \big ) \leq C.
\end{align}
Next, we test \eqref{F:1} with $\Phi$, \eqref{F:2} with $\lambda$, \eqref{F:3} with $M \theta$ and \eqref{F:4} with $K \bz$ for positive constants $M, K$ yet to be determined, and upon adding the resulting equations we arrive at
\begin{equation}\label{F:est}
\begin{aligned}
& \frac{1}{2}\frac{d}{dt} \Big ( \| \Phi \|_{L^2}^2 + \beta M \| \theta \|_{L^2}^2 \Big )  + \| \lambda \|_{L^2}^2 + M \| \nabla \theta \|^2 \\
& \qquad + M(B \theta + \lambda_c h(\vp) \theta, \theta) + M \kappa \| \theta \|_{L^2_\Gamma}^2 + K c_0 \| \E(\bz) \|^2 \\
& \quad \leq -(\lambda_p X_h, \Phi) + \big((\lambda_a + w_2^*) [k(\vp_h) - k(\vp) - k'(\vp) \xi] + h_2 (k(\vp_h) - k(\vp)), \Phi\big) \\
& \qquad + (\Psi'(\vp_h) - \Psi'(\vp) - \Psi''(\vp)\xi, \lambda) - (\chi \theta, \lambda) + (\C(\E(\bz) - \Phi \E^*) : \E^*, \lambda) \\
& \qquad - M \lambda_c ((\si - w_3^*) [h(\vp_h) - h(\vp) - h'(\vp)\xi], \theta) \anold{+} M \lambda_c ((h(\vp_h) - h(\vp)){h_3} , \theta) \\
& \qquad - M \lambda_c ((h(\vp_h) - h(\vp))(\si_h - \si), \theta) + K(\C(\Phi \E^*)\an{,} \E(\bz)).
\end{aligned}
\end{equation}
Looking at the terms involving $\bz$, we see 
\begin{align*}
(\C(\E(\bz) - \Phi \E^*) : \E^*, \lambda) + K (\C(\Phi \E^*) \an{,} \E(\bz)) \leq \frac{1}{8} \| \lambda \|_{L^2}^2 + C \| \E(\bz) \|_{L^2}^2 + C(1+K^2) \| \Phi \|_{L^2}^2
\end{align*}
for a positive constant $C$ independent of $K$. Next, for terms involving $\theta$, we employ the boundedness of $\si$ and $w_3^*$, and the Lipschitz continuity of $h$ to obtain 
\begin{align*}
& - (\chi \theta, \lambda)  - M \lambda_c ((h(\vp_h) - h(\vp))(\si_h - \si), \theta) \\
& \qquad  - M \lambda_c ((\si - w_3^*) [h(\vp_h) - h(\vp) - h'(\vp)\xi], \theta) \anold{+}  M \lambda_c ((h(\vp_h) - h(\vp)) h_3, \theta) \\
& \quad \leq \frac{1}{8} \| \lambda \|_{L^2}^2 + C \| \theta \|_{L^2}^2 + M^2 C \| \vp_h - \vp \|_{L^4}^2 \| \si_h - \si \|_{L^4}^2 \\
& \qquad  \anold{+ M^2C \| h''(\vp) \Phi + (\vp_h - \vp)^2 R_h \|_{L^2}^2 + M^2 C \| \vp_h - \vp \|_{L^2}^2 |h_3|^2}\\
& \quad \leq \frac{1}{8} \| \lambda \|_{L^2}^2 + C \| \theta \|_{L^2}^2 + CM^2 \big ( \| \Phi \|_{L^2}^2 + \| \vp_h - \vp \|_{H^1}^2 \big ( \| \si_h - \si \|_{H^1}^2 + \| \vp_h - \vp \|_{H^1}^2 + |h_3|^2 \big )\big )
\end{align*}
for a positive constant $C$ independent of $M$.   Next, for the terms involving $k$ in \eqref{F:est} we similarly have
\begin{align*}
& ((\lambda_a + w_2^*) [k(\vp_h) - k(\vp) - k'(\vp) \xi] + h_2 (k(\vp_h) - k(\vp)), \Phi) \\
& \quad \leq C \| \Phi \|_{L^2}^2 \anold{+ C \| \vp_h - \vp \|_{L^4}^4+ C \| \vp_h - \vp \|_{L^2}^2 | h_2|^2 } \\
& \quad \leq C \|\Phi \|_{L^2}^2 + C \big ( \anold{|h_2|^2 + \| \vp_h - \vp \|_{H^1}^2 } \big ) \| \vp_h - \vp \|_{H^1}^2,
\end{align*}
and for the terms involving $\Psi'$, 
\begin{align*}
& (\Psi'(\vp_h) - \Psi'(\vp) - \Psi''(\vp)\xi, \lambda) = (\Psi''(\vp)\Phi + \an{R_{\Psi}} (\vp_h - \vp)^2, \lambda)\\
& \quad \leq \| \Psi''(\vp) \|_{L^\infty} \| \Phi \|_{L^2} \| \lambda \|_{L^2} + \| \an{R_{\Psi}} \|_{L^6} \| \vp_h - \vp \|_{L^6}^2 \| \lambda \|_{L^2} \\
& \quad \leq \frac{1}{8} \| \lambda \|_{L^2}^2 + C\big ( 1+ \| \vp \|_{H^2}^4 \big ) \| \Phi \|_{L^2}^2 + C \|\vp_h - \vp \|_{H^1}^4,
\end{align*}
where we used \eqref{ass:pot}, \eqref{state:ctsdep} and \eqref{remain}.  Lastly, we tackle the term involving $X_h$.  First, \todo{we observe with the assumption $g \in W^{2,\infty}(\R^{d \times d}, \R)$ from \eqref{ass:opt} that }
\begin{align*}
& |g(\W_{,\E}(\vp_h, \E(\u_h))) - g(\W_{,\E}(\vp, \E(\u)))| \\
& \quad  = \left | \int_0^1 g'(z \W_{,\E}(\vp_h, \E(\u_h)) + (1-z) \W_{,\E}(\vp, \E(\u))) \dz : \C(\E(\u_h - \u) - (\vp_h-\vp)\E^*) \right | \\
& \quad \leq C|\an{\E(\u_h) - \E(\u)}| +  C|\vp_h - \vp|,
\end{align*}
and
\begin{align*}
& g(\W_{,\E}(\vp_h, \E(\u_h))) - g(\W_{,\E}(\vp, \E(\u)))  - g'(\W_{,\E}(\vp, \E(\u))) : \anold{\C(\E(\bv)-\xi \E^* )} \\
& \quad = g'(\W_{,\E}(\vp, \E(\u))) : \anold{\C(\E(\bz)-\Phi \E^* )} \\
& \qquad + \todo{\Big (\int_0^1 (1-z) g''(\cdot) \dz \Big ) [\C(\E(\u_h - \u) - (\vp_h - \vp)\E^*)] : [\C(\E(\u_h - \u) - (\vp_h - \vp)\E^*)]} \\
& \quad \leq g'(\W_{,\E}(\vp, \E(\u))) : \anold{\C(\E(\bz)-\Phi \E^* )} + C|\C(\E(\u_h - \u) - (\vp_h - \vp)\E^*)|^2,
\end{align*}
\todo{where the fourth order tensor $g''(\cdot)$ is evaluated at $(1-z)\W_{,\E}(\vp, \E(\u)) + z \W_{,\E}(\vp_h, \E(\u_h))$.}  Hence, using the boundedness of $f(\vp)$, \todo{$g(\cdot)$, $g'(\cdot)$, $g''(\cdot)$}, $\si$ and $\si_h$ that are independent of $\| \bm{h} \|_{\U}$, we obtain for a positive constant $\eps > 0$ to be determined later,
\begin{align*}
(\lambda_p X_h, \Phi) & \leq \eps \| \Phi \|_{H^2}^2 + C \| X_h \|_{L^1}^2 \\
& \leq \eps \| \Phi \|_{H^2}^2 + C \an{\| \E(\u_h) - \E(\u) \|_{L^2}^2\| \vp_h - \vp \|_{L^4}^2\| \si_h - \si \|_{L^4}^2  }
\\ & \quad  +\an{ C \| \vp_h - \vp \|_{L^4}^4\| \si_h - \si \|_{L^2}^2 }
%\\ & \quad 
\an{ + C \an{\| \E(\u_h) - \E(\u) \|_{L^2}^2 } \big ( \| \si_h - \si \|_{L^2}^2 + \| \vp_h - \vp \|_{L^2}^2\big )} \\
& \quad + C \| \vp_h - \vp \|_{L^2}^4 + C \anold{ \| \vp_h - \vp \|_{L^2}^2 \| \si_h - \si \|_{L^2}^2} + C \| \Phi \|_{L^2}^2 \\
& \quad  + C \| \theta \|_{L^2}^2 + C \| \E(\bz) \|_{L^2}^2 + C \| \E(\u_h) - \E(\u) \|_{L^2}^4
\\ & \leq \an{\eps \| \Phi \|_{H^2}^2 +  C  (\| \bm{h} \|_{\U}^4 + \| \bm{h} \|_{\U}^6) }.
%& \leq \eps \| \Phi \|_{H^2}^2 +C \big ( \| \bm{h} \|_{\U}^2 +  \| \si_h - \si \|_{L^2}^2 \big ) \| \bm{h} \|_{\U}^2 + C \big ( \| \Phi \|_{L^2}^2 + \| \theta \|_{L^2}^2 + \| \E(\bz) \|_{L^2}^2 \big ),
\end{align*}
To close the estimate we require an estimate for $\| \Phi \|_{H^2}^2$, which can be obtained from \eqref{F:2}.  Using that $(\Phi, \lambda, \theta, \anold{\bz}) \in \bYl$ we see that 
\begin{align*}
\| \Lx \Phi \|_{L^2}^2 & \leq C \| \lambda \|_{L^2}^2 + C \| \Psi'(\vp_h) - \Psi'(\vp) - \Psi''(\vp) \xi \|_{L^2}^2 + C \| \theta \|_{L^2}^2 + C \| \E(\bz) \|_{L^2}^2 + C \| \Phi \|_{L^2}^2 \\
& \leq C \big ( 1 + \| \vp \|_{H^2}^4 \big ) \| \Phi \|_{L^2}^2 + C \| \bm{h} \|_{\U}^4 + C \big ( \| \theta \|_{L^2}^2 + \| \E(\bz) \|_{L^2}^2 + \| \lambda \|_{L^2}^2 \big ).
\end{align*}
By elliptic regularity, there exists a positive constant $C_0$ independent of $(\Phi, \lambda, \theta, \bz)$ and $\bm{h}$, as well as $M$ and $K$, such that 
\begin{align}\label{F:H2}
\| \Phi \|_{H^2}^2 \leq C \big ( 1 + \| \vp \|_{H^2}^4 \big ) \| \Phi \|_{L^2}^2 + C \| \bm{h} \|_{\U}^4 + C_0 \big ( \| \theta \|_{L^2}^2 + \| \E(\bz) \|_{L^2}^2 + \| \lambda \|_{L^2}^2 \big ).
\end{align}
Let $\alpha$ be a positive constant such that 
\begin{align*}
\alpha C_0 \leq \tfrac{1}{8}.
\end{align*}
Multiplying \eqref{F:H2} with $\alpha$ and adding to \eqref{F:est}, then employing the estimates for the right-hand side and choosing $\eps = \frac{\alpha}{2}$, we obtain
\begin{align*}
& \frac{1}{2} \frac{d}{dt} \Big ( \| \Phi \|_{L^2}^2 + \beta M \| \theta \|_{L^2}^2 \Big ) + \frac{1}{2} \| \lambda \|_{L^2}^2 + \frac{\alpha}{2} \| \Phi \|_{H^2}^2 \\
& \qquad  + M \| \nabla \theta \|_{L^2}^2 + MB \| \theta \|_{L^2}^2 + M\kappa \| \theta \|_{L^2_\Gamma}^2 - \an{\hat C} \| \theta \|_{L^2}^2 + (K c_0 - \an{\hat C}) \| \E(\bz) \|_{L^2}^2\\
& \quad \leq C \big ( 1 + \| \vp \|_{H^2}^4 \big ) \| \Phi \|_{L^2}^2 + C \| \vp_h - \vp \|_{H^1}^2 \big ( |h_2|^2 + |h_3|^2 \big ) \\
& \qquad + C \| \vp_h - \vp \|_{H^1}^2 \big ( \| \si_h - \si \|_{H^1}^2 + \| \vp_h - \vp \|_{H^1}^2 + \| \E(\u_h) - \E(\u) \|_{L^2}^2 \big ) \\
& \qquad + C \| \E(\u_h) - \E(\u) \|_{L^2}^2 \big ( \| \E(\u_h) - \E(\u) \|_{L^2}^2 + \| \si_h - \si \|_{L^2}^2 \big ) 
\an{+  C  (\| \bm{h} \|_{\U}^4 + \| \bm{h} \|_{\U}^6)}
\\
& \quad =: C \big ( 1 + \| \vp \|_{H^2}^4 \big ) \| \Phi \|_{L^2}^2 + C \| \vp_h - \vp \|_{H^1}^2 \big (|h_2|^2 +|h_3|^2 \big ) + \mathcal{R}_h,
\end{align*}
where the positive constants $\an{\hat C}$ appearing on the left-hand side are independent of $M$ and $K$.  Hence, choosing $M$ and $K$ sufficiently large, with Gronwall's inequality and Korn's inequality, as well as $\Phi(0) = \theta(0) = 0$, we have
\begin{align*}
& \| \Phi \|_{L^\infty(0,T;L^2)}^2 + \beta \| \theta \|_{L^\infty(0,T;L^2)}^2 + \| \lambda \|_{L^2(Q)}^2 + \| \Phi \|_{L^2(0,T;H^2)}^2 \\
& \qquad + \| \theta \|_{L^2(0,T;H^1)}^2 + \| \bz \|_{L^2(0,T;\X)}^2 \\
& \quad \leq C \exp \Big ( C + C \| \vp \|_{L^4(0,T;H^2)}^4  \Big ) \int_0^T  \| \vp_h - \vp \|_{H^1}^2 \big (|h_2|^2 + |h_3|^2 \big ) + \mathcal{R}_h \dt \\
& \quad \leq \an{  C  (\| \bm{h} \|_{\U}^4 + \| \bm{h} \|_{\U}^6)},
\end{align*}
where the last inequality comes from the application of \eqref{state:ctsdep}.  This completes the proof \an{as \eqref{toprove} has been shown}.
\end{proof}

\subsection{Adjoint system}
Associated to an optimal control $\bm{w}^* \in \U_{ad}$ and its corresponding solution $(\vp, \mu, \si, \u)$ are the adjoint variables $(p,q,r,\bs)$ that satisfies the following adjoint system written in strong form:
\begin{subequations}\label{adj}
\begin{alignat}{3}
\anold{{f}_1} & = \anold{- p_t} - \Lx q + \anold{\G} && \quad\text{ in } Q, \\
q & = \anold{-\Lx p} && \quad\text{ in } Q, \\
0 & = - \beta r_t - \Lx r + B r + \anold{\K} &&\quad \text{ in } Q, \\
\anold{{\bm{f}}_2} & = \div (\C(\E(\bs) +\anold{\bH})) && \quad\text{ in } Q, \\
p(T)  &= \alpha_\Omega (\vp(T) - \vp_\Omega), \quad r(T) = 0 && \quad\text{ in } \Omega, \\
0 & = \pdnu p \anold{= \pdnu q}, \quad \pdnu r + \kappa r = 0 && \quad\text{ on } \Sigma, \\
\an{\0} & = (\C(\E(\bs) + \anold{\bH) - \bm{f}_2}) \bm{n} && \quad\text{ on } \anold{\SD},
\\ 
\anold{\bs} & =\anold{ \0} && \quad \anold{\text{ on } \SN,}
\end{alignat}
\end{subequations}
where using the notation $n'(\cdot,\vp) = \frac{\pd n}{\pd \vp} (\cdot,\vp)$, 
\begin{align*}
\anold{{f}_1 } & = \anold{f_1(\vp,\E(\u))} = \anold{\alpha_Q (\vp - \vp_Q)+  \tfrac{\alpha_{\E}}{2} n'(\cdot,\vp)|\W_{,\E}(\vp, \E(\u))|^2 }
\\ & \quad
\anold{ - \alpha_{\E} n(\cdot,\vp) \W_{,\E}(\vp,\E(\u)) : \C \E^*,}\\
\anold{{\bm{f}}_2} & = \anold{\an{{\bm f}_2(\vp,\E(\u))}} =\anold{-\div(\alpha_{\E} n(\cdot,\vp) \C \W_{,\E}(\vp, \E(\u))),}
\\ 
\anold{\G } & = \G(\vp,\si,\E(\u),p,q,r,\anold{\E(\bs),}w_2^*\anold{,w_3^*}) = \Psi''(\vp) q \anold{+} (\lambda_a + w_2^*) k'(\vp) p + q \C \E^* : \E^*  \\
& \quad \anold{-} \lambda_p \si p \big ( f'(\vp) g(\W_{,\E}(\vp, \E(\u))) \anold{+} f(\vp) g'(\W_{,\E}(\vp, \E(\u))) : \C \E^* \big ) \\
& \quad  \anold{+} h'(\vp)(\lambda_c \si - \anold{w_3^*}) r + \C \E^* : \E(\bs)\anold{,}  
\\
%& \quad + \alpha_{\E} \tfrac{1}{2} |\W_{,\E}(\vp, \E(\u))|^2 n'(\cdot,\vp) - \alpha_{\E} n(\cdot,\vp) W_{,\E}(\vp,\E(\u)) : \C \E^* , \\
\anold{\K} & = \K(\vp,\E(\u),p,q,r) =  h(\vp) \lambda_c r - \lambda_p f(\vp) g(\W_{,\E}(\vp, \E(\u))) p \anold{-} \chi q, \\
\anold{\bH} & = \bH(\vp, \si, \E(\u), p, q) = \anold{q \E^*+\lambda_p p \si f(\vp) g'(\W_{,\E}(\vp, \E(\u))) .}
\end{align*} 
We introduce the solution space
\begin{align*}
\bZ = \begin{cases}
H^1(0,T;H^2_{\bm{n}}(\Omega)') \cap L^2(0,T;H^2_{\bm{n}}(\Omega)) \times L^2(Q) & \\
\times H^1(0,T;H^1(\Omega)') \cap L^2(0,T;H^1(\Omega)) \times L^2(0,T;\X) & \text{ if } \beta > 0, \\
& \\
H^1(0,T;H^2_{\bm{n}}(\Omega)') \cap L^2(0,T;H^2_{\bm{n}}(\Omega)) \times L^2(Q) & \\
\times L^2(0,T;H^1(\Omega)) \times L^2(0,T;\X) & \text{ if } \beta = 0.
\end{cases}
\end{align*}

\begin{thm}\label{thm:adj}
For given $\bm{w}^* \in \U_{ad}$ with $(\vp, \mu, \si, \u) = \bS(\bm{w}^*) \in \bY$, under \eqref{ass:beta}-\an{\eqref{ass:target}}, there exists a unique solution $(p,q,r,\bs) \in \bZ$ to \an{the adjoint system} \eqref{adj} with $p(T) = \alpha_\Omega (\vp(T) - \vp_\Omega)$, $r(\an{T}) = 0$ if $\beta > 0$, and
\begin{subequations}\label{adj:weak}
\begin{alignat}{3}
%& \notag
0  &= \int_0^T \anold{-}\inn{p_t}{\zeta}_{H^2} - (q, \Lx \zeta)+  ( \anold{\G-{f}_1, \zeta)  \dt,} \label{adj:1} \\
0  &= \int_0^T \anold{-}(q, \phi) + (\nabla p, \nabla \phi) \dt, \label{adj:2} \\
0  &= \int_0^T \beta \inn{-r_t}{\phi} + (\nabla r, \nabla \phi) + (Br, \phi) + \kappa (r, \phi)_\Gamma + (\anold{\K},\phi)\dt, \label{adj:3} \\
0  & \anold{= \int_0^T (\C(\E(\bs) + \anold{\bH}) , \nabla \bet) -  (\bm{f}_2 , \bet)} \dt, \label{adj:4}
\end{alignat}
\end{subequations}
for all $\zeta \in L^2(0,T;H^2_{\bm{n}}(\Omega))$, $\phi \in L^2(0,T;H^1(\Omega))$ and $\bet \in L^2(0,T;\X)$.
\end{thm}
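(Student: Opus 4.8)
The plan is to treat \eqref{adj} as exactly what it is: a linear, backward-in-time, coupled elliptic--parabolic system whose unknowns are $(p,q,r,\bs)$ and whose coefficients and source terms $f_1$, $\bm{f}_2$, $\G$, $\K$, $\bH$ are built from the fixed optimal state $(\vp,\mu,\si,\u) = \bS(\bm{w}^*)$, which by Theorem \ref{thm:state} enjoys the bounds \eqref{state:bdd} together with $\vp \in L^4(0,T;H^2_{\bm n}(\Omega))$ and $\si \in L^\infty(Q)$. Since the system is linear, uniqueness will follow from existence once suitable a priori estimates are in place (the difference of two solutions solves \eqref{adj} with zero data), so the core of the argument is existence. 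First I would reverse time via $\tau = T-t$, turning \eqref{adj} into a forward problem in which the terminal conditions $p(T) = \alpha_\Omega(\vp(T)-\vp_\Omega) \in L^2(\Omega)$ and $r(T)=0$ become initial data, and then set up a Faedo--Galerkin scheme using the eigenfunctions of the Neumann Laplacian for $p,q,r$ and a Galerkin basis of $\X$ for $\bs$. The whole difficulty is then to derive bounds on the approximations that are uniform in the discretisation parameter.

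For the a priori estimates I would test the (reversed) equations as follows. Testing \eqref{adj:1} with $p$ and using the relation $q=-\Lx p$ from \eqref{adj:2} (so that the term $-(q,\Lx p)$ becomes $\|q\|_{L^2}^2$, the boundary terms vanishing by $\pdnu p = \pdnu q = 0$) produces the dissipative pair $\tfrac{d}{d\tau}\|p\|_{L^2}^2 + \|q\|_{L^2}^2$; the $L^2(0,T;H^2_{\bm n}(\Omega))$-regularity of $p$ is then recovered a posteriori from $q\in L^2(Q)$ by elliptic regularity, and $p_t \in L^2(0,T;H^2_{\bm n}(\Omega)')$ follows by comparison in \eqref{adj:1}. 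Testing \eqref{adj:3} with $r$ (and, in the case $\beta=0$, invoking the coercivity granted by $B>0$ or, if $B=0$, by $\kappa>0$ together with the generalised Poincaré inequality, exactly as in the proof of Theorem \ref{thm:lin}) controls $r$ in $L^\infty(0,T;L^2)\cap L^2(0,T;H^1)$, respectively in $L^2(0,T;H^1)$, while testing \eqref{adj:4} with $\bs$ and using $\E:\C\E \geq c_0|\E|^2$ and Korn's inequality \eqref{Korn} yields the pointwise-in-time bound $\|\E(\bs)\|_{L^2}^2 \leq C\big(\|q\|_{L^2}^2 + \|p\|_{L^2}^2 + \|n\C\W_{,\E}(\vp,\E(\u))\|_{L^2}^2\big)$, since $\bH$ and $\bm{f}_2$ depend on $q,p$ and on $\W_{,\E}(\vp,\E(\u))\in L^\infty(0,T;L^p)$ with coefficients bounded thanks to \eqref{ass:f}, \eqref{ass:opt} and the boundedness of $\si$.

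The key point is to close the loop created by the cross-couplings: $\bH$ and $\K$ feed $q$ (and $p$) into the $\bs$- and $r$-equations, while conversely $\G$ feeds $\C\E^*:\E(\bs)$ and $h'(\lambda_c\si - w_3^*)r$ back into the $p$-equation. I would estimate these coupling terms by Young's inequality, e.g.\ $\int_\Omega \C\E^*:\E(\bs)\,p \leq \delta\|\E(\bs)\|_{L^2}^2 + C_\delta\|p\|_{L^2}^2$, and then substitute the bound for $\|\E(\bs)\|_{L^2}^2$ above; choosing $\delta$ small enough lets the resulting $\delta C\|q\|_{L^2}^2$ be absorbed into the $\|q\|_{L^2}^2$ on the left. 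The potential term is handled by $(\Psi''(\vp)q,p)\leq \|\Psi''(\vp)\|_{L^3}\|q\|_{L^2}\|p\|_{L^6}$, the elliptic interpolation $\|p\|_{H^1}\leq C\|p\|_{H^2}^{1/2}\|p\|_{L^2}^{1/2}\leq C(\|q\|_{L^2}+\|p\|_{L^2})^{1/2}\|p\|_{L^2}^{1/2}$ and Young's inequality, producing a small multiple of $\|q\|_{L^2}^2$ plus a factor $(1+\|\vp\|_{L^6}^4)\|p\|_{L^2}^2$ that is integrable in time by $\vp\in L^\infty(0,T;H^1(\Omega))$. All remaining products are lower order. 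After summing, the right-hand side takes the Gronwall form $C(1+\|\vp\|_{H^2}^4)(\|p\|_{L^2}^2+\|r\|_{L^2}^2) + (\text{data})$ with integrable coefficient (here $\vp\in L^4(0,T;H^2)$ is used), and Gronwall's lemma delivers the uniform bounds.

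Finally I would pass to the limit in the Galerkin scheme by weak and weak-$*$ compactness, recovering the weak formulation \eqref{adj:weak} and the attainment of the reversed initial data, with the Aubin--Lions lemma supplying the strong convergence needed for any term that is merely continuous rather than linear; undoing the time reversal then yields $(p,q,r,\bs)\in\bZ$. The main obstacle I anticipate is precisely the simultaneous closure of the fourth-order adjoint Cahn--Hilliard pair $(p,q)$ with the elliptic elasticity field $\bs$: because $\bs$ must be reconstructed from $(p,q)$ at each time and re-enters $\G$, the smallness constant $\delta$ (and, if needed, large weights in front of the $\bs$- and $r$-test functions, in the spirit of the constants $K,R$ of Theorem \ref{thm:lin}) must be tuned so that the genuinely dissipative quantities $\|q\|_{L^2}^2$, $\|\nabla r\|_{L^2}^2$ and $c_0\|\E(\bs)\|_{L^2}^2$ dominate the couplings.
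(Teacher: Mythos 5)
Your proposal is correct and follows essentially the same route as the paper: a (backward-in-time) Galerkin scheme, testing the four equations with suitably weighted multiples of $p$, $q$, $r$, $\bs$, recovering $p\in L^2(0,T;H^2_{\bm n}(\Omega))$ from $q=-\Lx p$ by elliptic regularity, using $B>0$ or $\kappa>0$ with the generalised Poincar\'e inequality when $\beta=0$, absorbing the cross-couplings by tuning the weights so that $\|q\|_{L^2}^2$, $\|\nabla r\|_{L^2}^2$ and $c_0\|\E(\bs)\|_{L^2}^2$ dominate, then Gronwall, comparison for $p_t$ and $r_t$, and uniqueness by linearity. The paper implements your ``large weights'' variant directly (constants $K$, $H$, $Z$ on the test functions) rather than a small $\delta$ in Young's inequality, but these are equivalent bookkeeping choices.
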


\anold{
\begin{remark}
Let us notice that the test function space $H^2_{\bm n}(\Omega)$ in \eqref{adj:1} can be weakened by assuming a more regular target function $\vp_\Omega$. In fact, formally testing \eqref{adj:1} by $q$ and \eqref{adj:2} by $p_t$ will lead to the regularity $q\in L^2 (0,T;{H^1(\Omega)})$, and $ p\in L^\infty (0,T;{H^1(\Omega))}$ provided  $p(T)=\alpha_\Omega(\vp(T)-\vp_\Omega) \in H^1(\Omega)$, which is fulfilled if $\vp_\Omega \in H^1(\Omega)$. %However, to keep the choice of the target $\vp_\Omega$ as general as possible as in \eqref{ass:target},  we chose to restrict accordingly the test space in \eqref{adj:1} without requiring any superfluous assumption on $\vp_\Omega$.
\end{remark}}

\begin{proof}
We proceed with formal estimates that can be rigorously derived with a standard Galerkin approximation \anold{and let us note that in} the following positive constants denoted by the symbol $C$ will be independent of the Galerkin parameter.  Then, testing $\zeta = Kp$ in \eqref{adj:1}, $\phi = -Kq$ and $\phi = p$ in \eqref{adj:2}, $\phi = Hr$ in \eqref{adj:3} and $\bet =  \anold{Z}\bs$ in \eqref{adj:4} for some positive constants $K$, $H$ and $\anold{Z}$ yet to be determined, we obtain after summing the resulting equalities
\begin{equation}\label{adj:est}
\begin{aligned}
& - \frac{1}{2} \frac{d}{dt} \Big (K \| p \|_{L^2}^2 + H \beta \| r \|_{L^2}^2 \Big ) + K\| q \|_{L^2}^2 + \| \nabla p \|_{L^2}^2 \\
& \qquad + H \| \nabla r \|_{L^2}^2 + H \kappa \| r \|_{L^2_\Gamma}^2 + H B \| r \|_{L^2}^2 + \anold{Z} c_0 \| \E(\bs) \|_{L^2}^2 \\
& \quad \leq \anold{-(K(\G - {f}_1), p)} \an{+} \anold{(q,p)} - (H\K, r) - \anold{\an{(Z (\C\bH - {\bm{f}}_2)} , \E(\bs))}.
\end{aligned}
\end{equation}
Firstly, we obtain from \eqref{adj:2} and elliptic regularity that
\begin{align}\label{adj:reg}
\| p \|_{H^2}^2 \leq C \| \Lx p \|_{L^2}^2 + C \| p \|_{L^2}^2 \leq C \| q \|_{L^2}^2 + C \|p \|_{L^2}^2.
\end{align}
Then, a short calculation shows that 
\begin{align*}
\anold{(K(\G - {f}_1), p)} & \leq  \| q \|_{L^2}^2 + C \big ( 1 + \| \Psi''(\vp) \|_{L^\infty}^2 \big ) \| p \|_{L^2}^2 + C \| r \|_{L^2}^2  \\
& \quad+ C \| \vp \|_{L^2}^2 + C \| \E(\bs) \|_{L^2}^2 + \tfrac{1}{2} \| p \|_{H^2}^2 + C\| \vp - \vp_Q \|_{L^2}^2, \\
\anold{(q,p)} & \leq  \| q \|_{L^2}^2 + C \| p \|_{L^2}^2, \\
(H \K, r) & \leq H^2 \| q \|_{L^2}^2 + C \big ( \| r \|_{L^2}^2 + \| p \|_{L^2}^2 \big ), \\
{\an{(Z (\C\bH - {\bm{f}}_2)}, \E(\bs))} & \leq \anold{Z}^2 \| q \|_{L^2}^2 + C\anold{Z}^2 \big ( \| p \|_{L^2}^2 + \| \vp \|_{L^2}^2 + \| \E(\u) \|_{L^2}^ 2 \big ) + C \| \E(\bs) \|_{L^2}^2,
\end{align*}
with positive constants $C$ independent of $H$ and $\anold{Z}$.  Adding \eqref{adj:reg} to \eqref{adj:est} and substituting the above yields
\begin{align*}
& - \frac{1}{2} \frac{d}{dt} \Big (K \| p \|_{L^2}^2 + H \beta \| r \|_{L^2}^2 \Big ) + (K - (1+C+H^2+\anold{Z}^2))\| q \|_{L^2}^2 + \tfrac{1}{2} \| p \|_{H^2}^2 \\
& \qquad + H \| \nabla r \|_{L^2}^2 + H \kappa \| r \|_{L^2_\Gamma}^2 + (H B - C) \| r \|_{L^2}^2 + (\anold{Z} c_0 - C) \| \E(\bs) \|_{L^2}^2 \\
& \quad \leq C \big ( 1 + \| \vp \|_{H^2}^4 \big ) \| p \|_{L^2}^2 + C \| \vp - \vp_Q \|_{L^2}^2   + C \big (  \| \vp \|_{L^2}^2 + \| \E(\u) \|_{L^2}^ 2 \big ).
\end{align*}
If $B > 0$, we choose $HB > C$, otherwise we use the generalised Poincar\'e inequality with $H$ sufficiently large so that 
\begin{align*}
H \| \nabla r \|_{L^2}^2 + H \kappa \| r \|_{L^2_\Gamma}^2 \geq (C+1) \| r \|_{L^2}^2.
\end{align*}
Then, choosing $\anold{Z}$ sufficiently large so that $\anold{Z}c_0 > C$, and then finally $K$ sufficiently large, we obtain via Gronwall's inequality (applied backwards in time) and Korn's inequality that 
\begin{equation}\label{adj:est1}
\begin{aligned}
& \| p \|_{L^\infty(0,T;L^2)}^2 + \beta \| r \|_{L^\infty(0,T;L^2)}^2 + \| q \|_{L^2(Q)}^2 \\
& \qquad + \| p \|_{L^2(0,T;H^2)}^2 + \| r \|_{L^2(0,T;H^1)}^2 + \| \bs \|_{L^2(0,T;\X)}^2 \\
& \quad \leq C \|\vp - \vp_Q \|_{L^2(Q)}^2 + C \| \vp \|_{L^2(Q)}^2 + C \| \E(\u) \|_{L^2(Q)}^2.
\end{aligned}
\end{equation}
Then, from \eqref{adj:1} we infer
\begin{align*}
\| p_t \|_{L^2(0,T;H^2_{\bm{n}}(\Omega)')} \leq  C \big ( 1 +  \| \Psi''(\vp) \|_{L^\infty(0,T;L^2)} \big ) \| q \|_{L^2(Q)} + C \| p \|_{L^2(Q)} + C \| r \|_{L^2(Q)},
\end{align*}
and if $\beta > 0$, \anold{a comparison of terms in} \eqref{adj:3} gives
\begin{align*}
\| r_t \|_{L^2(0,T;H^1(\Omega)')} \leq C\| r \|_{L^2(0,T;H^1)} + C \| p \|_{L^2(Q)} + C \| q \|_{L^2(Q)}.
\end{align*}
These estimates are sufficient to pass to the limit and deduce the existence of a solution $(p,q,r,\bs) \in \bZ$ to \eqref{adj} in the sense that \eqref{adj:weak} is fulfilled. Moreover, as the adjoint system is linear in $(p,q,r,\bs)$, the difference of any two solutions satisfy \eqref{adj:weak} where the terms involving $n(\cdot,\vp)$ and $\vp - \vp_Q$ are absent in ${f}_1$ and $\bm{f}_2$.  Consequently, we arrive at an analogue of \eqref{adj:est1} for the difference of two solutions where the right-hand side is zero, which in turn leads to uniqueness of solutions.
\end{proof}

\subsection{Optimality conditions}
\anold{Lastly, we exploit the differentiability property of $\bS$ established so far to obtain the first-order necessary conditions for optimality.
In this direction, we} first express the reduced cost functional $\J$ as the sum
\begin{align*}
\J(\bm{w}) \anold{:=} \J_1(\bm{w}) + \J_2(\bm{w}),
\end{align*}
where
\begin{align*}
\J_1 (\bm{w}) & = \anold{\frac{\alpha_\Omega}{2} \| \bS_1(\bm{w}) - \vp_\Omega \|_{L^2(\Omega)}^2 +\frac{\alpha_Q}{2} \| \bS_1(\bm{w}) - \vp_Q \|_{L^2(Q)}^2 }\\
& \quad + \frac{\alpha_{\E}}{2} \int_Q n(x, \bS_1(\bm{w})) |\W_{,\E}(\bS_1(\bm{w}), \E(\bS_4(\bm{w})))|^2 \dx \dt \\
& \quad + \frac{\gamma_1}{2} \| w_1 \|_{L^2(\Sigma)}^2 + \frac{\gamma_2}{2} \| w_2 \|_{L^2(0,T)}^2 + \frac{\gamma_3}{2} \| w_3 \|_{L^2(0,T)}^2, \\
\J_2(\bm{w}) & = \gamma_4 \| w_2 \|_{L^1(0,T)}+ \gamma_5 \| w_3 \|_{L^1(0,T)}.
\end{align*}
Then, for arbitrary $\bm{y} \in \U_{ad}$ and an optimal control $\bm{w}^* \in \U_{ad}$ with corresponding state $(\vp, \mu, \si, \u) = \bS(\bm{w}^*) \in \bY$ and linearised state variables $(\xi, \eta, \psi, \bv) \in \bYl$ to \eqref{lin:weak} corresponding to $\bm{h} = \bm{y} - \bm{w}^*$,
the differentiability of the solution operator $\bS: \U \to \bY$ and the chain rule shows that 
\begin{equation}\label{opt:0}
\begin{aligned}
D \J_1(\bm{w}^*)[\bm{h}] & = D \J_1(\bm{w}^*)[\bm{y} - \bm{w}^*] \\
& = \anold{ \int_\Omega \alpha_\Omega (\vp(T) - \vp_\Omega)\xi(T) \dx + \int_Q \alpha_Q  (\vp - \vp_Q)\xi  \dx \dt}\\
& \qquad + \alpha_{\E} \int_Q \tfrac{1}{2} n'(x,\vp) \xi |\W_{,\E}|^2 + n(x,\vp) \W_{,\E} : \C(\E(\bv) - \xi \E^*) \dx \dt \\
& \qquad +  \int_0^T \gamma_1( w_1^* , h_1)_\Gamma \dt + \int_0^T \gamma_2 w_2^* h_2 + \gamma_3 w_3^* h_3 \dt,
\end{aligned}
\end{equation}
where $\W_{,\E}$ is evaluated at $(\vp, \E(\u))$.  On the other hand, optimality of $\bm{w}^*$ and the convexity of $\J_2$ leads to 
\begin{align*}
0 & \leq \J(\bm{w}^* + t (\bm{y}- \bm{w}^*)) - \J(\bm{w}^*) \\
& = \J_1(\bm{w}^* + t(\bm{y} - \bm{w}^*)) - \J_1(\bm{w}^*) + \J_2((1-t)\bm{w}^* + t\bm{y}) - \J_2(\bm{w}^*) \\
& \leq \J_1(\bm{w}^* + t(\bm{y} - \bm{w}^*)) - \J_1(\bm{w}^*) + t [\J_2(\bm{y}) - \J_2( \bm{w}^*)]
\end{align*}
for all $t \in (0,1)$ and arbitrary $\bm{y} \in \U_{ad}$.  Dividing by $t$ and passing to the limit $t \to 0$ yields the inequality
\begin{align}\label{opt:1}
0 \leq D \J_1(\bm{w}^*)[\bm{y} - \bm{w}^*] + \J_2(\bm{y})  - \J_2(\bm{w}^*) \quad \forall \bm{y} \in \U_{ad}.
\end{align}
Arguing as in \cite[Sec.~3 and 4]{ST}, the inequality \eqref{opt:1} allows us to interpret $\bm{w}^*$ as a solution to the convex minimisation problem
\anold{\begin{align*}
\min_{\bm{y} \in \U} \Big ( D \J_1(\bm{w}^*) \an{[\bm{y}]} + \J_2(\bm{y}) + \mathbb{I}_{\U_{ad}}(\bm{y}) \Big ) \quad \text{ where } \quad\mathbb{I}_{\U_{ad}}(\bm{y}) = \begin{cases} 0 & \text{ if } \bm{y} \in \U_{ad}, \\
+\infty &\text{ otherwise},
\end{cases}
\end{align*}}
denotes the indicator function of the set $\U_{ad}$.  Using the definition of subdifferentials, the inequality \eqref{opt:1} can also be interpreted as
\begin{align*}
\0 \in \pd \Big ( D \J_1(\bm{w}^*) + \J_2 + \mathbb{I}_{\U_{ad}} \Big )( \bm{w}^*) = \{D\J_1(\bm{w}^*)\} + \pd \J_2(\bm{w}^*) + \pd \mathbb{I}_{\U_{ad}}(\bm{w}^*),
\end{align*}
where the equality is due to the well-known sum rule for subdifferentials of convex functionals.  This implies there exist elements $\bm{\zeta} \in \pd \mathbb{I}_{\U_{ad}}(\bm{w}^*)$, and $\lambda_2(t) \in \pd \| w_2^*(t) \|_{L^1(0,T)}$, $\lambda_3(t) \in \pd \| w_3^*(t) \|_{L^1(0,T)}$ with $\lambda_2, \lambda_3 \in L^\infty(0,T)$ satisfy \eqref{lambd} for a.e.~$t \in (0,T)$, \an{see, e.g.,} \cite[Sec.~4.2]{ST} for similar ideas regarding the derivation, such that 
\begin{align*}
\0 = D \J_1(\bm{w}^*) + \bm{\lambda} + \bm{\zeta}
\end{align*}
for $\bm{\lambda} = (0, \gamma_4 \lambda_2, \gamma_5 \lambda_3)^{\top}$.  From the definition of $\pd \mathbb{I}_{\U_{ad}}$ we have
\begin{align*}
(\bm{\zeta}, \bm{y} - \bm{w}^*) \leq \mathbb{I}_{\U_{ad}}(\bm{y}) - \mathbb{I}_{\U_{ad}}(\bm{w}^*)  = 0 \quad \text{ as } \quad \bm{y}, \bm{w}^* \in \U_{ad},
\end{align*}
where we use $(\cdot,\cdot)$ to denote the inner product on $\U$.  Hence, from \eqref{opt:1} we deduce that $\bm{w}^* \in \U_{ad}$ satisfies
\begin{align}\label{opt:2}
0 \leq D\J_1(\bm{w}^*)[\bm{y} - \bm{w}^*] + (\bm{\lambda}, \bm{y} - \bm{w}^*) \quad \forall \bm{y} \in \U_{ad}.
\end{align}
Next, we aim to simplify \eqref{opt:0} with the help of the adjoint variables.  The standard procedure is to test \eqref{lin:1} with $\zeta = p$, \eqref{lin:2} with $\zeta = \an{-}q$, \eqref{lin:3} with $\zeta = r$, \eqref{lin:4} with $\bet = \bs$, then take the sum and compare with the resulting equality obtained from the sum of \eqref{adj:1} with $\zeta = \xi$, \eqref{adj:2} with $\phi = \eta$, \eqref{adj:3} with $\phi = \psi$ and \eqref{adj:4} with $\bet = -\bv$, which yields the relations
\begin{align*}
& \int_0^T \kappa (h_1, r)_\Gamma + (h_3 h(\vp), r) - (h'(\vp)\xi (\lambda_c \si - w_3^*), r) \dt \\
& \quad = \int_0^T (\lambda_p f(\vp)g(\W_{,\E}(\vp, \E(\u))\anold{)} p, \psi) - (\chi q, \psi) \dt,
\end{align*}
and
\begin{align*}
& \anold{\alpha_Q} \int_Q  (\vp - \vp_Q) \xi \dx \dt + \anold{\alpha_\Omega} \int_\Omega  (\vp(T) - \vp_\Omega) \xi(T) \dx \\
& \qquad + \int_Q {\frac {\alpha_{\E}} {2}}  n'(x,\vp) \xi |\W_{,\E}(\vp, \E(\u))|^2 + \alpha_{\E} n(x,\vp) \W_{,\E}(\vp, \E(\u)) : \C(\E(\bv) - \xi \E^*) \dx \dt \\
& \quad =  \an{\int_0^T} (h'(\vp) (\lambda_c \si - w_3^*) r, \xi) - (h_2 k(\vp), p)- (\chi \psi, q)  \dt \\
& \qquad + \an{\int_0^T}   (\lambda_p f(\vp) \psi g(\W_{,\E}(\vp, \E(\u))), p) \dt.
\end{align*}
Combining these two leads to the simplification
\begin{align*}
D\J_1(\bm{w}^*)[\bm{h}] & = \int_0^T \kappa (h_1, r)_\Gamma - h_2(k(\vp), p) + h_3 (h(\vp), r) \dt \\
& \quad  + \int_0^T  \gamma_1 (w_1^*, h_1)_\Gamma \dt + \int_0^T \gamma_2 w_2^* h_2 + \gamma_3 w_3^* h_3 \dt,
\end{align*}
and \eqref{nes:opt} is then a consequence of \eqref{opt:2}.

\section{Sparsity of non-negative optimal controls}\label{sec:spar}
In the medical context, the control variables $w_2 = m(t)$ and $w_3 = s(t)$ should be non-negative, and so we modify the admissible control subsets $\U_{ad}^{(2)}$ and $\U_{ad}^{(3)}$ to the following
\begin{align}\label{new:add}
\U_{ad}^{(i)} = \{ w_i \in L^\infty(0,T)\, : \, 0 \leq w_i(t) \leq \overline{w} \text{ for a.e.~} t \in (0,T)\} \text{ for } i = 2,3,
\end{align}
where $\overline{w}$ is a fixed positive constant.  If $\gamma_1 > 0$, from the optimality condition \eqref{nes:opt}, substituting $y_2 = w_2^*$ and $y_3 = w_3^*$, then using the Hilbert projection theorem allows us to infer that $w_1^*$ is the $L^2(\Sigma)$-orthogonal projection of $-\kappa r/ \gamma_1$ onto the closed and convex subset $\U_{ad}^{(1)}$ of $L^2(\Sigma)$, leading to the representation formula
\begin{align*}
w_1^*(x,t) = \min \big ( \overline{w}_1(x,t), \, \max \big ( \underline{w}_1(x,t), \, -\tfrac{\anold{\kappa}}{\gamma_1} r(x,t) \big ) \, \big ) \text{ for a.e.~}(x,t) \in \Sigma.
\end{align*}
In a similar fashion, if $\gamma_2, \gamma_4 > 0$, substituting $y_1 = w_1^*$ and $y_3 = w_3^*$ in \eqref{nes:opt} leads to representation formula
\begin{align}\label{rep:2}
w_2^*(t) = \PP_{[0, \overline{w}]} \left ( \frac{1}{\gamma_2} \left(\int_\Omega k(\vp(x,t)) p(x,t) \dx - \gamma_4 \lambda_2(t) \right ) \right ) \text{ for a.e.~} t \in (0,T),
\end{align}
where $\PP_{[a,b]} : \R \to [a,b]$ denotes the pointwise projection function
\begin{align*}
\PP_{[a,b]}(s) = \min (b, \, \max(a, s)).
\end{align*}
\anold{Similarly,} if $\gamma_3, \gamma_5 > 0$, then substituting $y_1 = w_1^*$ and $y_2 = w_2^*$ in \eqref{nes:opt} leads to representation formula
\begin{align*}
w_3^*(t) = \PP_{[0, \overline{w}]} \left ( \anold{-\frac{1}{\gamma_3}} \left(\an{\gamma_5 \lambda_3(t) + \int_\Omega h(\vp(x,t)) r(x,t) \dx} \right ) \right ) \text{ for a.e.~} t \in (0,T).
\end{align*}
Due to the $L^1$-regularisation for $w_2$ and $w_3$ in the optimal control problem, we can expect the optimal controls $w_2^*$ and $w_3^*$ to vanish on certain parts of the time interval $[0,T]$.  This is formulated as follows.

\begin{thm}\label{thm:spas}
Under \eqref{ass:beta}-\eqref{ass:target}, let $\bm{w}^* = (w_1^*, w_2^*, w_3^*) \in \U_{ad}$ be an optimal control where $\U_{ad}^{(2)}$ and $\U_{ad}^{(3)}$ are now given as \eqref{new:add} \anold{with the associated state $(\vp, \mu, \si, \u) = \bS(\bm{w}^*)$ and adjoint variables $(p,q,r,\bs)$.}  Then, we have the following characterisations:
\begin{itemize}
\item \an{If} $\gamma_2, \gamma_4 > 0$, for a.e.~$t \in (0,T)$,
\begin{align}\label{spa:1}
w_2^*(t) = 0 \quad \Longleftrightarrow \quad \int_\Omega k(\vp(x,t)) p(x,t) \dx \leq \gamma_4.
\end{align}
\item \an{If} $\gamma_3, \gamma_5 > 0$, for a.e.~$t \in (0,T)$,
\begin{align}\label{spa:2}
w_3^*(t) = 0 \quad \Longleftrightarrow \quad \int_\Omega h(\vp(x,t)) r(x,t) \dx \geq -  \gamma_5.
\end{align}
\end{itemize}
\end{thm}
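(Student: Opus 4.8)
The plan is to read off both equivalences directly from the pointwise representation formula \eqref{rep:2} for $w_2^*$ and its analogue for $w_3^*$, combined with the subdifferential inclusion \eqref{lambd}. In the present non-negative setting the case $w_i^*(t) < 0$ in \eqref{lambd} is vacuous, so the inclusion reduces to $\lambda_i(t) = 1$ whenever $w_i^*(t) > 0$, and $\lambda_i(t) \in [-1,1]$ whenever $w_i^*(t) = 0$, for $i \in \{2,3\}$. The one elementary fact I would isolate at the outset is that, since $\overline{w} > 0$, the pointwise projection obeys
\begin{align*}
\PP_{[0,\overline{w}]}(s) = 0 \quad \Longleftrightarrow \quad s \leq 0.
\end{align*}

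For \eqref{spa:1}, abbreviate $\Phi_2(t) := \int_\Omega k(\vp(x,t)) p(x,t) \dx$. To prove the forward implication I would assume $w_2^*(t) = 0$; then $\lambda_2(t) \in [-1,1]$, and applying the projection fact to \eqref{rep:2} forces $\tfrac{1}{\gamma_2}(\Phi_2(t) - \gamma_4 \lambda_2(t)) \leq 0$, whence $\Phi_2(t) \leq \gamma_4 \lambda_2(t) \leq \gamma_4$ using $\gamma_4 > 0$ and $\lambda_2(t) \leq 1$. For the reverse implication I would argue by contradiction: suppose $\Phi_2(t) \leq \gamma_4$ but $w_2^*(t) > 0$; then $\lambda_2(t) = 1$, so \eqref{rep:2} reads $w_2^*(t) = \PP_{[0,\overline{w}]}(\tfrac{1}{\gamma_2}(\Phi_2(t) - \gamma_4))$, whose argument is non-positive, yielding $w_2^*(t) = 0$, a contradiction.

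The characterisation \eqref{spa:2} follows the same pattern with $\Phi_3(t) := \int_\Omega h(\vp(x,t)) r(x,t) \dx$, the only change being the bookkeeping of signs produced by the minus sign in the representation formula for $w_3^*$. Assuming $w_3^*(t) = 0$ gives $-\tfrac{1}{\gamma_3}(\gamma_5 \lambda_3(t) + \Phi_3(t)) \leq 0$, hence $\Phi_3(t) \geq -\gamma_5 \lambda_3(t) \geq -\gamma_5$; conversely, $\Phi_3(t) \geq -\gamma_5$ together with $w_3^*(t) > 0$ forces $\lambda_3(t) = 1$ and a non-positive projection argument, again a contradiction.

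Finally, I would flag the point that genuinely needs care — and which I regard as the main (if modest) obstacle — namely the interdependence of $\lambda_i(t)$ and $w_i^*(t)$. Because $\lambda_i(t)$ is pinned down only to the full interval $[-1,1]$ on the very set $\{w_i^* = 0\}$ we wish to characterise, one cannot substitute a single fixed value of $\lambda_i$ there; the contradiction argument in each reverse implication is precisely what sidesteps this, by exploiting that $\lambda_i(t)$ is determined ($=1$) on the complementary set $\{w_i^* > 0\}$. I would also record that these two characterisations presuppose $\gamma_2, \gamma_4 > 0$ and $\gamma_3, \gamma_5 > 0$ respectively, so that the representation formulas are in force and the divisions by $\gamma_2$, $\gamma_3$ are legitimate.
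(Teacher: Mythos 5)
Your proposal is correct and follows essentially the same route as the paper: both directions of each equivalence are read off from the representation formulas \eqref{rep:2} (and its analogue for $w_3^*$) together with the reduced subdifferential inclusion \eqref{new:lambd}, with the forward implication using $\lambda_i(t)\leq 1$ on $\{w_i^*=0\}$ and the reverse implication handled by contrapositive via $\lambda_i(t)=1$ on $\{w_i^*>0\}$. Your explicit isolation of the fact that $\PP_{[0,\overline{w}]}(s)=0$ if and only if $s\leq 0$, and your remark on why the interdependence of $\lambda_i$ and $w_i^*$ forces the contrapositive argument, are exactly the points the paper's proof relies on implicitly.
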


\begin{remark}
Similar one-sided inequalities characterising sparsity of optimal controls are common when the lower bound in the admissible control sets $\U_{ad}^{(i)}$ is zero, \an{see, e.g.,}\cite[Thm.~3.1]{CasN} or \cite[Thm.~3.3]{Cas}.  If we allow $\underline{w}_2(t), \underline{w}_3(t)$ in the definition \eqref{Uad} of $\U_{ad}^{(2)}$ and $\U_{ad}^{(3)}$ to be a negative constant $\underline{w}$, then it is possible to provide a representation formula also for $\lambda_2$ and $\lambda_3$, see \cite{ST} in the context of Cahn--Hilliard tumour models, and also \cite{CasN,Cas,Stadler} for classical parabolic and elliptic control problems.  However, a negative lower bound for the controls may not be applicable in a medical context.
\end{remark}

\begin{remark}
We point out that if $w_2^*(\anold{t_0}) = 0$ for some $\anold{t_0} \in (0,T)$, then there exists an open subinterval $I \subset (0,T)$ with $\anold{t_0} \in I$ such that $w_2^*(t) = 0$ for all $t \in I$.  The same assertion also holds for $w_3^*$ provided $\beta > 0$.  This is due to the fact that the mappings
\begin{align*}
t & \mapsto \int_\Omega k(\vp(x,t)) p(x,t) \dx, \\
t & \mapsto \int_\Omega h(\vp(x,t)) r(x,t) \dx \text{ if } \beta > 0,
\end{align*}
are continuous in light of the regularities $\vp \in C^0([0,T];L^2(\Omega))$ from Theorem \ref{thm:state}, $p \in C^0([0,T];L^2(\Omega))$ and $r \in C^0([0,T];L^2(\Omega))$ if $\beta > 0$ from Theorem \ref{thm:adj}.  In particular, this behaviour where the optimal controls are zero over an interval is consistent with the prevailing medical practice in which there are periods in the overall treatment where radiation/cytotoxic therapies are not applied to patients.\end{remark}

\begin{proof}
Let us just present the details for \eqref{spa:1}, as \eqref{spa:2} can be derived in an analogous manner.  Due to the modification to $\U_{ad}^{(2)}$, we observe from \eqref{lambd} that $\lambda_2(t) \in L^\infty(0,T)$ satisfies
\begin{align}\label{new:lambd}
\lambda_2(t) \in \begin{cases} \{1 \} & \text{ if } w_2^*(t) > 0, \\
[-1,1] & \text{ if } w_2^*(t) = 0,
\end{cases}
\end{align}
for a.e.~$t \in (0,T)$.  The left implication proceeds as follows: Consider the set $E = \{ t \in (0,T) \, : \, w_2^*(t) = 0\}$, where by the representation formula \eqref{rep:2}, we see that 
\begin{align*}
\int_\Omega k(\vp(x,t)) p(x,t) \dx - \gamma_4 \lambda_2(t) \leq 0\quad  \text{ for all } t \in E.
\end{align*}
Using \eqref{new:lambd} and rearranging, we obtain the left implication of \eqref{spa:1}.  For the right implication we argue by contrapositive:  Suppose $w_2^*(t) > 0$, then from \eqref{new:lambd} we have $\lambda_2(t) = 1$ and thus by the representation formula it holds that
\begin{align*}
\int_\Omega k(\vp(x,t)) p(x,t) \dx - \gamma_4 \lambda_2(t) =  \int_\Omega k(\vp(x,t)) p(x,t) \dx - \gamma_4 > 0.
\end{align*}
Upon rearranging we obtain the assertion
\begin{align*}
w_2^*(t) > 0 \quad \implies \quad \int_\Omega k(\vp(x,t)) p(x,t) \dx > \gamma_4,
\end{align*}
which gives the right implication of \eqref{spa:1}.
\end{proof}

An interesting consequence is that we can identify $w_2^*(t) \equiv 0$ as a local optimal control provided $\gamma_4$ is sufficiently large, and similarly $w_3^*(t) \equiv 0$ is a local optimal control provided $\gamma_5$ is sufficiently large. 

\begin{cor}
\an{Suppose \eqref{ass:beta}-\eqref{ass:target} and $\gamma_2 > 0$. Then} there exists $\gamma_* > 0$ such that for $\gamma_4 > \gamma_*$, $w_2^*(t) \equiv 0$ for all $t \in (0,T)$ is an optimal control for \eqref{opt}.  Similarly, suppose $\gamma_3 > 0$ and $\beta > 0$, then there exists $\gamma^* > 0$ such that for $\gamma_5 > \gamma^*$, $w_3^*(t) \equiv 0$ for all $t \in (0,T)$ is an optimal control for \eqref{opt}.
\end{cor}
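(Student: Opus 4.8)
The plan is to combine the sparsity characterisations \eqref{spa:1}--\eqref{spa:2} with an a priori bound on the two integrals $\int_\Omega k(\vp) p \dx$ and $\int_\Omega h(\vp) r \dx$ that is \emph{uniform over all admissible controls and, crucially, independent of the $L^1$-weights} $\gamma_4$ and $\gamma_5$. The decisive observation is that neither the state system \eqref{state} nor the adjoint system \eqref{adj} contains $\gamma_4$ or $\gamma_5$, so the corresponding states and adjoints — and hence these two integrals — can be estimated by constants that do not see the $L^1$-regularisation. Once such thresholds $\gamma_*, \gamma^*$ are isolated, pushing $\gamma_4 > \gamma_*$ (resp.\ $\gamma_5 > \gamma^*$) past them will force the vanishing of the associated optimal control through the sparsity relations.

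First I would fix an optimal control $\bm{w}^* \in \U_{ad}$ (with $\U_{ad}^{(2)}, \U_{ad}^{(3)}$ now given by \eqref{new:add}), whose existence follows from the existence result of Section~\ref{sec:min} applied verbatim to the modified admissible set, together with its state $(\vp, \mu, \si, \u) = \bS(\bm{w}^*)$ and adjoint $(p,q,r,\bs)$ from Theorem~\ref{thm:adj}. Since $k \in W^{1,\infty}(\R)$ is bounded, the Cauchy--Schwarz inequality gives, for a.e.\ $t \in (0,T)$,
\begin{align*}
\Big| \int_\Omega k(\vp(x,t)) p(x,t) \dx \Big| \leq \|k\|_{L^\infty(\R)} |\Omega|^{\frac12} \|p\|_{L^\infty(0,T;L^2)}.
\end{align*}
The adjoint bound \eqref{adj:est1} controls $\|p\|_{L^\infty(0,T;L^2)}$ by $\|\vp - \vp_Q\|_{L^2(Q)}$, $\|\vp\|_{L^2(Q)}$ and $\|\E(\u)\|_{L^2(Q)}$, all of which are estimated uniformly by the state bound \eqref{state:bdd} (its constant $K_1$ depending only on the fixed data, as does the Gronwall constant in \eqref{adj:est1} through the uniform $L^4(0,T;H^2)$-norm of $\vp$). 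This yields a constant $\gamma_* > 0$, independent of $\gamma_4$ and of the chosen optimal control, with $|\int_\Omega k(\vp) p \dx| \leq \gamma_*$ a.e.\ in $(0,T)$. Choosing $\gamma_4 > \gamma_*$, the left implication of \eqref{spa:1} then forces $w_2^*(t) = 0$ for a.e.\ $t$, so the (existing) optimal control has vanishing second component, which is the first assertion.

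For $w_3$ I would repeat the argument with \eqref{spa:2} in place of \eqref{spa:1}, estimating $|\int_\Omega h(\vp) r \dx| \leq \|h\|_{L^\infty(\R)} |\Omega|^{\frac12} \|r\|_{L^\infty(0,T;L^2)}$. The hypothesis $\beta > 0$ enters precisely here: the estimate \eqref{adj:est1} controls $\beta^{\frac12}\|r\|_{L^\infty(0,T;L^2)}$, so only for $\beta > 0$ does one recover a uniform $L^\infty(0,T;L^2)$-bound on $r$, whereas for $\beta = 0$ the adjoint $r$ lies merely in $L^2(0,T;H^1)$, which gives no a.e.-in-time control of the integral. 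This produces $\gamma^* > 0$, independent of $\gamma_5$, with $|\int_\Omega h(\vp) r \dx| \leq \gamma^*$ a.e.; taking $\gamma_5 > \gamma^*$ gives $\int_\Omega h(\vp) r \dx \geq -\gamma^* > -\gamma_5$, so the left implication of \eqref{spa:2} yields $w_3^*(t) = 0$ for a.e.\ $t$.

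The main obstacle is conceptual rather than computational: one must verify that the thresholds $\gamma_*, \gamma^*$ truly do not depend on $\gamma_4, \gamma_5$, which rests on the absence of the $L^1$-terms from both the state and adjoint systems and on the fact that the bounds \eqref{state:bdd} and \eqref{adj:est1} hold with constants uniform across $\U_{ad}$. The secondary subtlety is the use of $\beta > 0$ to secure the $L^\infty$-in-time regularity of $r$, mirroring the continuity requirement already noted in the preceding remark.
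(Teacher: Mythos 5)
Your proposal is correct and follows essentially the same route as the paper: both rest on the observation that the weights $\gamma_1,\dots,\gamma_5$ enter neither the state system nor the adjoint system, so the constants in \eqref{state:bdd} and \eqref{adj:est1} yield bounds on $\int_\Omega k(\vp)p\dx$ and $\int_\Omega h(\vp)r\dx$ (the latter via the $\beta>0$ control of $\|r\|_{L^\infty(0,T;L^2)}$) that are independent of $\gamma_4,\gamma_5$, after which the sparsity characterisations \eqref{spa:1}--\eqref{spa:2} force the controls to vanish. The only cosmetic difference is that you use two-sided bounds via Cauchy--Schwarz where the paper states the one-sided inequalities actually needed.
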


\begin{proof}
It suffices to use conditions \eqref{spa:1} and \eqref{spa:2}.  In light of the admissible control subsets defined in \eqref{new:add}, where $\overline{w}$ is a fixed constant, the constant $K_1$ in \eqref{state:bdd} is independent of the weights $\{\alpha_Q, \alpha_\Omega, \alpha_\E, \gamma_1, \gamma_2, \gamma_3, \gamma_4, \gamma_5\}$ in the optimal control problem \eqref{opt}.  Then, revisiting the proof of Theorem \ref{thm:adj}, we note that $\{\gamma_i\}_{i=1}^5$ do not appear in the adjoint system \eqref{adj}.  Consequently, the positive constants on the right-hand side of \eqref{adj:est1} are independent of $\{\gamma_i\}_{i=1}^5$.  Employing \eqref{ass:f} on the boundedness of $h$ and $k$, we deduce that 
\begin{equation*}
\begin{alignedat}{3}
\int_\Omega k(\vp(x,t)) p(x,t) \dx &\leq C_1 \quad &&\text{ for a.e. } t \in (0,T), \\
\int_\Omega h(\vp(x,t)) r(x,t) \dx & \geq - C_2 \quad && \text{ for a.e. } t \in (0,T) \text{ if } \beta > 0,
\end{alignedat}
\end{equation*}
for positive constants $C_1$ and $C_2$ independent of $\{\gamma_i\}_{i=1}^5$.  The assertion now follows from \eqref{spa:1} and \eqref{spa:2} by choosing $\gamma_* = C_1$ and $\gamma^* = C_2$.
\end{proof}

\section*{Acknowledgements}
The work of the second author is partially supported by a grant from the Research Grants Council of the Hong Kong Special Administrative Region, China [Project No.: HKBU 14302319].
The third author gratefully acknowledges financial support from the LIA-COPDESC initiative and from the research training group 2339 ``Interfaces, Complex Structures, and Singular Limits'' of the German Science Foundation (DFG).

\footnotesize

\end{document}